\def\dual                 {{\vee}}
\def\rk                 {{\rm rk}}
\def\Pic		{{\rm Pic}}
\def\ZZ                 {{\mathbb Z}}
\def\PP                {{\mathbb P}}
\def\RR                 {{\mathbb R}}
\def\CC                 {{\mathbb C}}
\newtheorem{lemma}{Lemma}[section]
\newtheorem{theorem}[lemma]{Theorem}
\newtheorem{corollary}[lemma]{Corollary}
\newtheorem{proposition}[lemma]{Proposition}
\theoremstyle{definition}
\newtheorem{definition}[lemma]{Definition}
\newtheorem{conjecture}[lemma]{Conjecture}
\newtheorem{remark}[lemma]{Remark}
\theoremstyle{remark}
\newtheorem*{proof*}{Proof}
\numberwithin{equation}{section}
\begin{document}
\title[On $(2,4)$ complete intersections]{On $(2,4)$ complete intersection threefolds that contain an Enriques surface}

\author{Lev A.  Borisov}
\address{Rutgers University, Department of Mathematics, 110 Frelinghuysen Rd.,
Piscataway \\ NJ \\ 08854 \\ USA}
\email{borisov@math.rutgers.edu}
\author{Howard J. Nuer}
\address{Rutgers University, Department of Mathematics, 110 Frelinghuysen Rd.,
Piscataway \\ NJ \\ 08854 \\ USA}
\email{hjn11@math.rutgers.edu}

\begin{abstract}
We study nodal complete intersections threefolds of type $(2,4)$
in $\PP^5$ which contain an Enriques surface
in its Fano embedding. We completely determine Calabi-Yau
birational models of a generic such threefold. These models have Hodge
numbers $h^{11}=2,h^{12}=32$. We also describe Calabi-Yau varieties
with Hodge numbers $(h^{11},h^{12})$ equal to $(2,26)$, $(23,5)$ and $(31,1)$.
The last two pairs of Hodge numbers are, to the best of our knowledge, new.
\end{abstract}

\maketitle

\section{Introduction}\label{sec1}
A smooth 
projective threefold $X$ is called Calabi-Yau if it satisfies $\omega_X\cong \mathcal O_X$ and 
$h^1(\mathcal O_X)=h^2(\mathcal O_X)=0$.
Such threefolds have attracted considerable attention over the years as possible 
targets for type II superstring compactifications. From the mathematical point of view, 
Calabi-Yau threefolds
are natural higher dimensional analogs of the celebrated K3 surfaces. However, their geometry 
is much more varied. 

It is not known in general whether Calabi-Yau threefolds fall into a finite or infinite number 
of different families; current constructions have already produced over 30,000
diffeomorphism classes. The vast majority of these have appeared as hypersurfaces in
Fano toric varieties from the work of Skarke and 
Kreuzer (\cite{KS97},\cite{KS00}), based on the reflexive polytope construction of Batyrev \cite{Bat94}.
In this paper we use a non-toric method of constructing Calabi-Yau threefolds, inspired 
by the work of Gross and Popescu \cite{GP}. Specifically, in \cite{GP} the authors considered Calabi-Yau threefolds that contain
$(1,d)$ polarized abelian surfaces for small $d$. They constructed their threefolds as small
resolutions of nodal complete intersections with equations contained in the ideal of 
the abelian surface. 

In our paper we consider Calabi-Yau $(2,4)$ complete intersections in $\PP^5$ that contain
an Enriques surface in its Fano embedding. Explicitly, these varieties can be constructed as
follows. Consider a four-dimensional complex vector space $V$. Take a generic 
four-dimensional subspace $W$ of $V^\dual\otimes V^\dual$ of bilinear forms on $V$.
Then $X=X_W$ is a hypersurface in the Grassmannian ${\rm Gr}(2,V)$  which consists
of two-dimensional subspaces $V_1\subseteq V$ such that the space of restrictions of elements of $W$ to
$V_1^\dual\otimes V_1^\dual$ has dimension at most three. 

This variety $X$ contains an Enriques surface under its Reye embedding.
It has $58$ ODP singularities at those $V_1$ where the dimension of the restriction
of $W$ is two and admits two small resolutions, $X^0$ and $X^1$. The Calabi-Yau threefold 
$X^0$ has the structure of a fibration over $\PP^1$ with the above-mentioned Enriques surface appearing 
as the reduction of a double fiber.  The Calabi-Yau variety $X^1$ has a small contraction
onto a complete intersection $\hat X$ of type $(4,4)$ in a weighted projective space 
$\PP(1,1,1,1,2,2)$ with variables $(u_1,\ldots,u_4,y,z)$ given explicitly by the equations
$$
\left\{
\begin{array}{lcl}
y^2&=&\det(\sum_i u_iA_i)\\
z^2&=&-\det(\sum_i u_i(A_i+B_i))+(y-{\rm Pf}(\sum_i u_iB_i))^2 
\end{array}
\right.
$$
where $A_i$ and $B_i$ are symmetric and antisymmetric parts of a basis of $W$.

Two more models of $X$ are obtained by means of an involution $\sigma$ on $\hat X$
which changes the sign of $z$, see Figure 1. Their Hodge numbers are 
 $(h^{11},h^{12})=(2,32)$ \hskip-3pt\footnote{We note that while this construction is new, these hodge numbers have been discovered previously in \cite{BK}.}. These exhaust all minimal models of $X$, which is the 
 main result of this paper.

The paper is organized as follows. In Section \ref{background} we recall the 
basic results about Enriques surfaces and their Fano embeddings.  In Section \ref{sec2}
we introduce our Calabi-Yau varieties. We give an alternative description of them as 
determinantal hypersurfaces in the Grassmannian. In Section \ref{secwp} we begin to
investigate the birational geometry of our varieties. Specifically, we produce a birational
model which is a nodal complete intersection in a weighted projective space.
We finish the discussion of birational geometry of our varieties in Section \ref{bir}.
We state some open questions in Section \ref{seccom}. 
In this section we also describe a construction of Calabi-Yau varieties with novel
Hodge numbers $(23,5)$ and $(31,1)$.
We chose to collect some of the technical statements that we use along the way 
in the appendices to streamline the main exposition.

{\bf Acknowledgements.} We thank Igor Dolgachev, Mark Gross, and Mihnea Popa for useful discussions.  We are also grateful to Mike Stillman and Dan Grayson for the program \textit{Macaulay2} \cite{GS} which was instrumental to our project.  Additionally, we thank an anonymous referee for various small corrections as well as useful help with the additional arguments and references required to remove the use of computer calculations in the proof of Theorem \ref{construction}.  The authors were partially supported by NSF Grant DMS  1201466.

\section{Background: Enriques surfaces}\label{background}
We present here a survey of pertinent results about Enriques surfaces.  For further information about Enriques surfaces see \cite{CD} or Section 5 of \cite{DM}.  Unless otherwise mentioned,
 proofs of facts about Enriques surfaces can be found there. 

\begin{definition} A smooth complex projective surface $S$ is called an Enriques surface if 
 $h^1(S,\mathcal O_S)=h^2(S,\mathcal O_S)=0$
 and the canonical bundle satisfies $\omega_S^2\cong \mathcal O_S$.
\end{definition}

\subsection{The Picard lattice and Fano models}\label{Fano Models}

It is well known that the Picard group of an Enriques surface $S$, denoted $\Pic(S)$, coincides with its Neron-Severi group and has torsion subgroup generated by the canonical class $K_S$.  Furthermore, ${\rm Num}(S)=\Pic(S)/(K_S)$, its group of divisors modulo numerical equivalence, comes with a nondegenerate symmetric bilinear form defined by the intersection pairing.  It makes ${\rm Num}(S)$ a rank 10 lattice with signature $(1,9)$.  It follows from the classification of even unimodular lattices that it must be isomorphic to $\mathbb E=U\oplus E_8$, where $U$ is the hyperbolic plane and $E_8$ is the unique negative definite even unimodular lattice of rank 8.  In \cite{DM} one sees that $\mathbb E$ can be realized as a primitive sublattice of the odd hyperbolic lattice \[\mathbb Z^{1,10}=\mathbb Z e_0+...+\mathbb Z e_{10}\] with $e_0^2=1,e_i^2=-1$ for $i>0$.  One can show that $\mathbb E$ is isomorphic to the orthogonal complement of the vector \[k_{10}=-3e_0+e_1+...+e_{10}.\]  We form the vectors \[f_i=e_i-k_{10}=3e_0-\sum_{j=1,j\neq i}^{10} e_j, i=1,...,10,\] which lie in $k_{10}^{\perp}$, and thus in this copy of $\mathbb E$.  These vectors form what's called an isotropic 10-sequence, that is a sequence of 10 isotropic vectors such that $f_i\cdot f_j=1$ for $i\neq j$.  Defining \[\Delta=10e_0-3e_1-...-3e_{10},\] we get that \[3\Delta=f_1+...+f_{10}.\]  This construction is very important when considered in ${\rm Num}(S)$.  We refer to \cite{DM} for the definition of a \textit{canonical} isotropic 10-sequence and the proof of the following 
\begin{theorem} For any canonical isotropic 10-sequence $(f_1,...,f_{10})$ in ${\rm Num}(S)$, there exists a unique $\delta \in \overline{\rm Amp}(S)\cap {\rm Num}(S)$ such that \[3\delta=f_1+...+f_{10}.\]  It satisfies $\delta^2=10,\delta\cdot f\geq 3$ for any nef isotropic class $f$.  Conversely, any nef $\delta$ satisfying this property can written as above for some canonical isotropic 10-sequence defined uniquely up to permutation.
\end{theorem}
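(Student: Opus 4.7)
The plan is to split the theorem into three parts: (i) existence and uniqueness of $\delta \in \overline{\rm Amp}(S) \cap {\rm Num}(S)$ with $3\delta = f_1 + \cdots + f_{10}$; (ii) the numerical properties $\delta^2 = 10$ and $\delta \cdot f \geq 3$ for every nef isotropic $f$; and (iii) the converse reconstruction of the canonical isotropic 10-sequence from $\delta$.

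For (i), I would use the primitive embedding $\mathbb E \hookrightarrow \mathbb Z^{1,10}$ recalled above and set $\Delta := 10e_0 - 3(e_1 + \cdots + e_{10})$. A direct calculation gives $\Delta \cdot k_{10} = -30 + 30 = 0$, so $\Delta$ lies in $k_{10}^\perp \cong \mathbb E$; and $\sum_i f_i = 30 e_0 - 9 \sum_j e_j = 3\Delta$. Since ${\rm Num}(S) \cong \mathbb E$ is torsion-free, $\delta = \Delta$ is the unique lattice solution of $3\delta = \sum_i f_i$, and the self-intersection is $\Delta^2 = 100 - 90 = 10$. Nefness of $\delta$ is immediate from the nefness of each $f_i$ (which is part of the definition of a canonical isotropic 10-sequence in \cite{DM}): for any irreducible curve $C \subset S$, $3(\delta \cdot C) = \sum_i f_i \cdot C \geq 0$, so $\delta \in \overline{\rm Amp}(S)$.

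For (ii), writing $\delta \cdot f = \tfrac{1}{3}\sum_i f_i \cdot f$ with each $f_i \cdot f$ a nonnegative integer, the bound $\delta \cdot f \geq 3$ amounts to ruling out $\sum_i f_i \cdot f \in \{0,3,6\}$ for a nonzero nef isotropic $f$. This is the classical minimal-degree statement for the Fano polarization of an Enriques surface: nef isotropic classes correspond to elliptic half-pencils, and the Fano morphism realizes $S$ as a degree-10 surface in $\PP^5$ whose elliptic curves necessarily have degree at least three. I would invoke the relevant result of Cossec directly from \cite{CD} (see also Section 5 of \cite{DM}) rather than reprove it.

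For the converse (iii), I would proceed lattice-theoretically. Given $\delta$ nef with $\delta^2 = 10$ and $\delta \cdot f \geq 3$ for all nef isotropic $f$, apply an isomorphism $\mathbb E \cong k_{10}^\perp \subset \mathbb Z^{1,10}$ sending $\delta$ to $\Delta$ (the orthogonal group $O(\mathbb E)$ is transitive on primitive vectors of fixed square inside the positive cone). Setting $f_i := e_i - k_{10}$ yields an isotropic 10-sequence with $\sum_i f_i = 3\delta$, and uniqueness up to permutation corresponds to the stabilizer of $\Delta$ acting as the symmetric group on the $e_i$. The main obstacle I anticipate is verifying that each resulting $f_i$ is nef on $S$ rather than merely a numerical class: a lattice isomorphism need not preserve the nef cone a priori. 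The assumption $\delta \cdot f \geq 3$ for all nef isotropic $f$ is precisely the chamber condition ensuring $\delta$ lies in the Fano chamber, so that the $f_i$ produced by the construction coincide with honest nef classes on $S$. Justifying this last step rigorously uses the reflection-group and effectivity arguments worked out in Cossec-Dolgachev, which form the heart of their Fano-polarization classification.
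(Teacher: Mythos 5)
The paper does not actually prove this theorem: it is stated as background and explicitly deferred, with the sentence ``We refer to \cite{DM} for the definition of a \emph{canonical} isotropic 10-sequence and the proof of the following.'' So there is no in-paper argument to compare yours against; the relevant proofs live in Cossec--Dolgachev and Dolgachev--Markushevich. Your explicit computations in the standard model ($\Delta\cdot k_{10}=0$, $\sum_i f_i=3\Delta$, $\Delta^2=10$) are correct, and your overall lattice-theoretic strategy is the standard one.

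That said, as a proof your outline has two genuine gaps. First, in part (i) you need to know that the \emph{given} canonical isotropic 10-sequence in ${\rm Num}(S)$ can be identified with the standard one $(e_i-k_{10})$ before you can conclude that $\sum_i f_i$ is divisible by $3$ in the lattice; divisibility is not automatic from the relations $f_i^2=0$, $f_i\cdot f_j=1$ alone, and the required transitivity of ${\rm O}(\mathbb E)$ on isotropic 10-sequences is exactly the kind of statement you only invoke later, in part (iii). Second, and more seriously, the converse hinges on showing that the classes $f_i$ produced by an abstract lattice isometry are actually nef on $S$ --- you correctly identify this as the crux, but then defer it (together with the uniqueness-up-to-permutation of the decomposition $3\delta=\sum_i f_i$) to the reflection-group arguments of Cossec--Dolgachev, which is where essentially all the content of the converse lies. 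A small improvement you could make to part (ii): rather than citing the minimal-degree result, note that two non-proportional nef isotropic classes on a surface have strictly positive intersection, so either $f$ equals some $f_i$ (giving $\delta\cdot f=\frac{1}{3}\cdot 9=3$) or $f\cdot f_i\geq 1$ for all $i$, whence $\sum_i f\cdot f_i\geq 10$ and, being divisible by $3$, at least $12$; this makes (ii) self-contained.
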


It is well known that nef primitive isotropic vectors in the Picard lattice come from nef effective divisors $F\in \Pic(S)$ with $F^2=0$ and such that $|2F|$ is an elliptic pencil with precisely two double fibers, $F$ itself and its conjugate $F+K_S$.  

Let $\Delta\in \Pic(S)$ be a divisor whose numerical equivalence class is as in the theorem above.  Then this defines a nef effective divisor with $\Delta^2=10$ and $\Delta\cdot F\geq 3$ for any nef effective divisor $F\in \Pic(S)$ with arithmetic genus 1.  From Lemma 4.6.2 in \cite{CD}, the linear system $|\Delta|$ induces a birational morphism $S\rightarrow \mathbb P^5$ onto a normal surface $\overline{S}$ of degree 10 with at worst nodal singularities.  In this case $\overline{S}$ is called a \textit{Fano model} of $S$ and $\Delta$ is called a \textit{Fano polarization}.  One defines the degeneracy invariant of the isotropic 10-sequence and finds that $\Delta$ is ample if and only if it is defined by a non-degenerate isotropic 10-sequence, which is the generic case.  The corresponding half-fibers $F_i$ are mapped to plane cubics lying in 10 planes $\Lambda_1,...,\Lambda_{10}$ such that $\Lambda_i\cap\Lambda_j\neq \varnothing$.  Moreover the conjugates $F_{-i}=F_i+K_S$ lie in 10 different planes $\Lambda_{-1},...,\Lambda_{-10}$.  The intersection of planes corresponding to one 10-sequence with the planes in the conjugate sequence depends on the geometry of the specific Fano embedding.  For the beautiful geometry surrounding these 20 planes see \cite{DM}.  We refer to \cite{BP} for another discussion of isotropic sequences, Fano models, and their relations with the automorphism group of an Enriques surface.

The existence of Fano polarizations was proved in \cite{CP}.

\subsection{Reye and Cayley Models of nodal Enriques surfaces}

It is not easy to describe the Fano embeddings of general Enriques surfaces. However,
such descriptions are more readily available for surfaces in the codimension one family of the 10-dimensional moduli space of Enriques surfaces consisting of so-called nodal \hskip-3pt\footnote{The term \emph{nodal} here means the existence of a $(-2)$ curve, rather than 
the presence of a singularity in the Fano embedding, see \cite{CD}.} Enriques surfaces.

Let $V$  be a complex vector space of dimension four. Let $W_+$ be a four-dimensional
subspace of ${\rm Sym^2}(V^\dual)$.  We can view it as a three-dimensional linear system (called a web) of quadrics in $\PP V$.   By considering the corresponding bilinear forms one also gets a 
three-dimensional linear system $W'$ of symmetric $(1,1)$ divisors on $\PP V \times \PP V$.
One says that $W_{+}$ is a \textit{regular} web if the base locus of $W'$, denoted $Y_{W_{+}}$, is smooth.  From the adjunction formula one sees that in this case $Y_{W_{+}}$ is a K3 surface.  One can check that for a regular web $W_{+}$ the base locus of $W_{+}$ is empty and the involution $\tau$ switching the factors has no fixed points along $Y_{W_{+}}$.  Thus the quotient of $Y_{W_{+}}$ by $\tau$ is an Enriques surface $S_{W_{+}}$ with K3 cover $Y_{W_{+}}$.  

Consider the $(1,1)$ line bundle on $\PP V\times \PP V$. It restricts to a line bundle of degree $20$ 
on $Y_{W_+}$. Its pushforward to $S_{W_+}$ splits into a direct sum of two line bundles of 
degree $10$ each, which correspond to different lifts of the involution $\tau$ to the $(1,1)$ bundle.
Depending on the choice of the line bundle its sections are symmetric or skew forms in
$H^0(\PP V\times \PP V,{\mathcal O}(1,1))=V^\dual\times V^\dual$.

\subsubsection{Reye Models}\label{Reye models}
The linear system $|\bigwedge^2 V^{\dual}|\subset H^0(\PP V\times \PP V,{\mathcal O}(1,1))$ restricted to $Y_{W_{+}}$ defines a morphism \[\pi:Y_{W_{+}}\rightarrow \mathbb P(\bigwedge^2 V),\] which factors through $S_{W_+}$, where $\pi(x,y)=\overline{xy}$, the line through $x$ and $y$ for $(x,y)\in \mathbb P^3\times\mathbb P^3\backslash\Delta$. The image of this morphism is a smooth surface $Re(W_{+})$ contained in $G(2,V)$ in its Pl\"{u}cker embedding and isomorphic to $S_{W_{+}}$.  This model of the nodal Enriques surface $S_{W_+}$ is called the \textit{Reye model}.  Consult Theorem 5.1 in \cite{DM} for an explicit geometric description of the plane cubics and (-2)-curves in this model and for a proof that this is indeed a Fano polarization.  It is a celebrated result that an Enriques surface is nodal if and only if it is isomorphic to a Reye model.  This was proved for a generic nodal Enriques by Cossec in \cite{CR} and for any nodal Enriques by Dolgachev and Reider, see \cite{DR},\cite{CD2}.

\subsubsection{Cayley Models}
Consider now  the linear system of symmetric $(1,1)$ divisors
$|{\rm Sym}^2(V^{\dual})|\subset H^0(\PP V\times \PP V,{\mathcal O}(1,1))$. It
 induces a closed embedding \[\phi:(\mathbb P(V)\times \mathbb P(V))/\tau \rightarrow \mathbb 
 P({\rm Sym}^2(V)).\]  Explicitly, this map sends $(\CC x,\CC y)$ 
 to $\CC (x\otimes y+y\otimes x)$.  The restriction to $S_{W_{+}}$ gives an embedding of $S_{W_{+}}$ in $\mathbb P({\rm Sym}^2(V))$.  In fact, the image lies in $\PP^5\cong
  \PP({\rm Ann(W_+)})\subset \PP({\rm Sym}^2(V))$.
 This image is denoted by $Ca(W_{+})$ and called the \textit{Cayley model} of the  Enriques surface $S_{W_+}$.  From this description we easily see that $Ca(W_{+})$ is the
locus of elements in $\PP({\rm Ann(W_+)})\subset \PP({\rm Sym}^2(V))$
which have rank $2$ when viewed  as quadrics in $\PP V^\dual$.

Clearly the Reye and Cayley models, being isomorphic to the same Enriques surface, are isomorphic, and the above constructions suggest that the defining polarizations are related.  Indeed one has the following

\begin{proposition} Let $|\Delta^r|$ be the linear system defining the Reye model of $S_{W_{+}}$.  Then the Cayley model of $S_{W_{+}}$ is defined by the linear system $|\Delta^c|:=|\Delta^r+K_{S_{W_{+}}}|$.
\end{proposition}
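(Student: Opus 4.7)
\emph{Proof sketch.} The plan is to identify both the Reye and Cayley polarizations with the two possible descents of the restricted $(1,1)$ line bundle along the K3 double cover $\pi\colon Y_{W_{+}}\to S_{W_{+}}$, and then exploit the fact that these two descents on an Enriques surface differ precisely by the canonical class.

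Let $M$ denote the restriction of $\mathcal O_{\PP V\times \PP V}(1,1)$ to $Y_{W_{+}}$. Since $\tau^{*}M\cong M$ and $\tau$ acts freely on $Y_{W_{+}}$, the line bundle $M$ admits exactly two $\tau$-linearizations, differing by the sign character of $\ZZ/2$. These produce two descended line bundles $L^{+}, L^{-}$ on $S_{W_{+}}$, both of degree $10$, with $\pi_{*}M\cong L^{+}\oplus L^{-}$. Since $\pi^{*}L^{+}\cong \pi^{*}L^{-}\cong M$, the difference $L^{+}\otimes (L^{-})^{-1}$ lies in $\ker\bigl(\pi^{*}\colon \Pic(S_{W_{+}})\to \Pic(Y_{W_{+}})\bigr)$, which is precisely $\ZZ/2$ generated by $\omega_{S_{W_{+}}}$. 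The two linearizations being genuinely distinct forces $L^{+}\ne L^{-}$, whence $L^{+}=L^{-}\otimes \omega_{S_{W_{+}}}$.

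To match $L^{\pm}$ with $\Delta^{r}$ and $\Delta^{c}$, I would use the $\tau$-eigenspace decomposition $H^{0}(\PP V\times \PP V,\mathcal O(1,1))=V^{\dual}\otimes V^{\dual}={\rm Sym}^{2}(V^{\dual})\oplus \bigwedge^{2}(V^{\dual})$ under factor swap. After restriction to $Y_{W_{+}}$ and descent, the symmetric summand (modulo $W_{+}$, which vanishes on $Y_{W_{+}}$) produces the sections of $L^{+}$ defining the Cayley embedding into $\PP({\rm Ann}(W_{+}))$, while the antisymmetric summand produces the sections of $L^{-}$ defining the Reye embedding into $\PP(\bigwedge^{2}V)$. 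Hence $\Delta^{c}$ corresponds to $L^{+}$ and $\Delta^{r}$ to $L^{-}$, giving $\Delta^{c}=\Delta^{r}+K_{S_{W_{+}}}$ in $\Pic(S_{W_{+}})$.

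The main obstacle is the identification in the previous paragraph: one must verify that the ${\rm Sym}^{2}(V^{\dual})/W_{+}$ and $\bigwedge^{2}(V^{\dual})$ pieces actually cut out the complete linear systems $|L^{+}|$ and $|L^{-}|$, not merely proper subsystems. A Riemann-Roch and Kodaira-vanishing computation on $S_{W_{+}}$ yields $h^{0}(L^{\pm})=6$, matching $\dim({\rm Sym}^{2}(V^{\dual})/W_{+})=\dim \bigwedge^{2}(V^{\dual})=6$, so it suffices to show that the restriction maps from sections on $\PP V\times \PP V$ to sections on $Y_{W_{+}}$ remain injective on the respective eigenspaces; this follows from smoothness (hence non-degeneracy) of $Y_{W_{+}}$ inside $\PP V\times \PP V$. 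Once this identification is secured, the sign decomposition above closes the argument.
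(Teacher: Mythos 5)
Your proposal is correct and follows essentially the same route the paper takes: the paper's preceding paragraphs set up exactly this picture (the $(1,1)$ bundle restricted to $Y_{W_+}$ pushes forward to $L^+\oplus L^-$ corresponding to the two lifts of $\tau$, with sections the symmetric resp.\ skew forms), and the proposition is then stated with the proof deferred to the references. Your completion via the observation that the two descents differ by the generator $\omega_{S_{W_+}}$ of $\ker\pi^*$, together with the dimension count $h^0(L^\pm)=6$, is the intended argument; the one point worth tightening is that injectivity of the restriction on the eigenspaces is best justified by noting that $Y_{W_+}$ is the complete intersection of the four symmetric $(1,1)$ divisors in $W'$, so the Koszul resolution gives $H^0(\mathcal I_{Y_{W_+}}(1,1))=W'$ exactly, rather than appealing only to smoothness.
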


For us the most relevant difference between the Cayley and Reye models is that the Reye model is contained in a quadric, while the Cayley model is not.  
\subsection{Equations of Fano models}\label{Fano equations}
Using the descriptions of the Cayley and Reye models above, we get explicit descriptions of the defining equations of these Enriques surfaces in their Fano embeddings. 

\begin{proposition} As above, let $Re(W_{+})\subset \mathbb P(\bigwedge^2 V)$ and $Ca(W_{+})\subset \mathbb P(\rm Ann(W_+))$ be the Reye and Cayley models of the nodal Enriques surface $S_{W_+}$, respectively.  Then the linear system of cubics containing these models (in the respective projective space) is 9-dimensional.  Moreover, the homogeneous ideal of $Ca(W_+)$ is generated by these 10 cubics, while the homogeneous ideal of $Re(W_+)$ is generated by a quadric and 4 cubics.
\end{proposition}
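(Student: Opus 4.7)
The plan is to combine a Riemann-Roch dimension count on $S$ with a classical appeal to the determinantal structure of each model. First I would compute $\chi(\Oa_S(k\Delta))=1+5k^2$ using $\Delta^2=10$ and the numerical triviality of $K_S$, and then invoke Kawamata-Viehweg vanishing to conclude $h^0(\Oa_S(k\Delta))=1+5k^2$ (since $k\Delta$ is ample for $k\geq 1$). This yields $6, 21, 46$ for $k=1,2,3$, which should be compared to $h^0(\Oa_{\PP^5}(k))=6, 21, 56$.

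Next, I would use the restriction sequence $0\to I_S(k)\to \Oa_{\PP^5}(k)\to \Oa_S(k\Delta)\to 0$. In degree $2$ source and target both have dimension $21$, so: for the Cayley model, which a priori lies on no quadric (its image lies in the rank-$\le 2$ locus cut out by cubics), the degree-$2$ restriction is an isomorphism; for the Reye model, which is contained in the Pl\"ucker quadric defining $G(2,V)\subset \PP(\bigwedge^2 V)$, there is exactly one quadric in its ideal. For degree $3$, granting $H^1(I_S(3))=0$---a form of projective $3$-normality for Fano-polarized Enriques surfaces that can be obtained either from classical results or verified from the determinantal descriptions below---the restriction sequence yields $h^0(I_S(3))=56-46=10$ in both cases, i.e.\ the claimed $9$-dimensional linear system of cubics.

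For the ideal-generation statements I would invoke the specific determinantal structure of each model. The Cayley model $Ca(W_+)\subset \PP(\mathrm{Ann}(W_+))$ is cut out from $\PP(\mathrm{Sym}^2 V)$ by the condition of being a rank-$\le 2$ symmetric bilinear form, whose ideal is generated by the ten independent $3\times 3$ minors (J\'ozefiak-Lascoux-Pragacz); by arithmetic Cohen-Macaulayness of this locus, the ten restricted cubics generate $I_{Ca}$. The Reye model $Re(W_+)\subset G(2,V)$ is the rank-$\le 2$ locus of the $4\times 3$ bundle map $W_+\otimes \Oa_{G(2,V)}\to \mathrm{Sym}^2 \mathcal{U}^\dual$, where $\mathcal{U}$ is the universal rank-$2$ subbundle; its four $3\times 3$ minors are cubic in Pl\"ucker coordinates and, together with the Pl\"ucker quadric, generate $I_{Re}$ via an Eagon-Northcott / Kempf-Lascoux-Weyman type resolution.

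The main obstacle will be this last step: the cohomology count alone determines only the dimension of the cubic generators, not that the ideal is truly generated in the stated low degrees. Establishing this requires the finer homological information of a minimal free resolution of the appropriate determinantal variety, which is where the specific structure of each model---rank-$\le 2$ symmetric forms for Cayley, rank-$\le 2$ of a bundle map for Reye---does the real work.
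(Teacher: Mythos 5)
Your proposal is correct and follows essentially the same route as the paper: the Riemann--Roch computation $h^0(S,3\Delta)=46$ (via Kodaira/Kawamata--Viehweg vanishing and $\Delta^2=10$) combined with $3$-normality of Fano models to get exactly $10$ cubics, and then the identification of the Cayley model as the rank-$\le 2$ symmetric locus (the ten cubic partials of the discriminant quartic $\mathcal D_4$ restricted to a generic $5$-plane) and of the Reye model as the rank-$\le 2$ locus of a $4\times 3$ matrix of restrictions inside $G(2,V)$. The only difference is that where the paper defers the ideal-generation statements to \cite{DM}, you supply the standard determinantal machinery (Cohen--Macaulayness of symmetric determinantal ideals and an Eagon--Northcott argument for maximal minors) that carries out exactly that step, which is a legitimate and correctly identified way to close the gap.
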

\begin{proof} We sketch parts of the proof here while deferring to \cite{DM} for the details.  If $S\subset \mathbb P^5$ is any Fano model defined by an ample Fano polarization $\Delta$, then $\Delta^2=10$.  By Serre duality and Kodaira vanishing applied to $\Delta+K_S$, we find that $h^0(S,3\Delta)=46$ from Riemann-Roch theorem.  But $h^0(\mathbb P^5, \mathcal O_{\mathbb P^5}(3))=56$, so we must have at least 10 cubics in the homogeneous ideal of $S\subset \mathbb P^5$.  From \cite{GLM} it follows that Fano models are 3-normal so that $h^1(\mathbb P^5,\mathcal I_S(3))=0$, and thus $h^0(\mathbb P^5,\mathcal I_S(3))=10$.  

In the special case of Reye and Cayley Fano models, we can describe  the 10 cubics explicitly.  For $Re(W_+)$, we can account for 6 of the cubics as products of the quadric defining $G(2,4)$, which contains $Re(W_+)$, with a linear polynomial.  We will give an explicit description of  4 additional cubics in Section 3.2.  For the Cayley model, notice that in the preceding subsection we showed that $Ca(W_+)=\phi(\mathbb P^3\times \mathbb P^3) \cap \mathbb P({\rm Ann}( W_+))$.  Recall that $\phi(\mathbb P^3\times \mathbb P^3)$ was the locus of reducible quadrics, which is the singular locus of the quartic hypersurface  $\mathcal D_4$
in $\mathbb P^9=\mathbb P({\rm Sym}^2(V))$ parametrizing singular quadrics.  Thus $Ca(W_+)$ is the intersection of the ten cubic partials of $\mathcal D_4$'s defining equation with the generic 5-plane $\mathbb P({\rm  Ann}(W_+))$.  Again, see \cite{DM} for proofs that the homogeneous ideals are generated as claimed.
\end{proof}

For unnodal Enriques surfaces the situation is similar to that of the Cayley model.
\begin{proposition} The homogeneous ideal of a Fano model of a general Enriques surface is generated by 10 cubics.
\end{proposition}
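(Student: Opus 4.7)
\begin{proof*}
The plan is to count cubics exactly as in the previous proposition, then upgrade ``having $10$ cubics'' to ``ideal generated by $10$ cubics'' via Castelnuovo--Mumford regularity. The key new input will be to rule out a quadric through a general Fano-embedded Enriques surface, which I would handle by a deformation to the Cayley model of a nodal Enriques surface.

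First, for any Enriques surface $S$ with ample Fano polarization $\Delta$ embedded in $\mathbb P^5$, Riemann--Roch and Kodaira vanishing yield $h^0(S,3\Delta)=46$, so $3$-normality from \cite{GLM} gives $h^0(\mathbb P^5,\mathcal I_S(3))=10$, exactly as in the proof above. Next, I would show that $\mathcal I_S$ is $3$-regular on $\mathbb P^5$, i.e.\ $H^i(\mathbb P^5,\mathcal I_S(3-i))=0$ for $i=1,\ldots,5$. The cases $i=4,5$ vanish for dimension reasons (both $H^i(\mathcal O_{\mathbb P^5}(3-i))$ and $H^{i-1}(\mathcal O_S(3-i))$ are zero). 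The case $i=3$ reduces via the ideal sequence to $h^2(\mathcal O_S)=0$, and the case $i=2$ reduces to $h^1(S,\Delta)=0$, which is Kodaira vanishing. The remaining case $i=1$ is $2$-normality, and via the Riemann--Roch equality $h^0(S,2\Delta)=21=h^0(\mathbb P^5,\mathcal O(2))$ this is equivalent to $h^0(\mathbb P^5,\mathcal I_S(2))=0$, the nonexistence of any quadric through $S$. Once $\mathcal I_S$ is $3$-regular, Mumford's theorem ensures that the maps $H^0(\mathcal I_S(d))\otimes H^0(\mathcal O(1))\to H^0(\mathcal I_S(d+1))$ are surjective for all $d\geq 3$, and since $\mathcal I_S$ has no sections in degrees $\leq 2$ (no constant, no linear form by non-degeneracy, no quadric by the above), the $10$ cubics generate.

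To rule out a quadric for a general $S$, I would use semicontinuity along a degeneration. Since nodal Enriques surfaces form a divisor in the moduli space, one can produce a flat family $\mathcal S\to T$ over a smooth curve with generic fiber a general Fano-embedded Enriques surface and special fiber $S_0$ a nodal Enriques surface in its Cayley embedding. By the previous proposition the ideal of $S_0$ is generated by cubics, and in particular $h^0(\mathbb P^5,\mathcal I_{S_0}(2))=0$. Upper semicontinuity of $h^0$ in flat families then forces this vanishing on an open neighborhood of $0\in T$, hence for the generic fiber.

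The delicate point will be this last step: the nodal Enriques carries both a Reye and a Cayley polarization, differing by the torsion $K_S$, and only the latter has no quadric in its ideal (the former lies in the Grassmannian quadric). One must therefore arrange the degeneration so that the Fano polarization on the general fiber tends to the Cayley (not the Reye) polarization on $S_0$, while remaining ample throughout. This requires a coherent choice of polarization on a suitable moduli stack of polarized Enriques surfaces, but both branches exist, so choosing the Cayley branch is possible.
\end{proof*}
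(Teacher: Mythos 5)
Your argument is mathematically sound in outline, but it is a genuinely different (and much more detailed) route than the paper's, which disposes of the proposition in two sentences by citation: Cossec's result in \cite{CR} that a Fano model lies on a quadric if and only if the surface is a Reye (hence nodal) Enriques supplies the non-existence of a quadric directly, and the generation by the $10$ cubics is deferred to \cite{DM}. What your approach buys is a self-contained mechanism for the second half: the dimension counts $h^0(S,3\Delta)=46$, $h^0(S,2\Delta)=21=h^0(\mathcal O_{\mathbb P^5}(2))$ via Riemann--Roch and Kodaira vanishing (valid since $K_S$ is numerically trivial), together with $3$-normality from \cite{GLM}, do give $3$-regularity of $\mathcal I_S$ once one knows $h^0(\mathcal I_S(2))=0$, and Mumford's theorem then yields generation in degrees $\leq 3$; this correctly upgrades ``there are $10$ cubics'' to ``the ideal is generated by them,'' which the paper never argues explicitly.

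The one place where your proof is weaker than the paper's is exactly the step you flag. The degeneration to the Cayley model, plus semicontinuity of $h^0(\mathcal I_{S_t}(2))$ in a flat family, only rules out a quadric for Fano-polarized Enriques surfaces lying in the closure of the locus of Cayley-polarized nodal ones. To conclude for \emph{a general} Fano model (let alone every Fano model of a general surface) you need the relevant moduli space of Fano-polarized Enriques surfaces to be irreducible, or at least that its generic point specializes to a \emph{Cayley}-polarized (not Reye-polarized) nodal surface; since the two polarizations differ only by the $2$-torsion class $K_S$ and are numerically indistinguishable, ``choosing the Cayley branch'' is a statement about monodromy on the torsion of the Picard group in the universal family, not something that follows from the mere existence of both branches. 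This is a real gap as written. The paper sidesteps it entirely because Cossec's theorem characterizes the surfaces on a quadric intrinsically (they are Reye, hence nodal), so no degeneration or moduli-irreducibility input is needed. If you want to keep your architecture, the cleanest fix is to replace the degeneration by that citation; otherwise you must prove the connectedness statement about the moduli stack that you currently only assert.
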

\begin{proof} In \cite{CR} it's shown that a Fano model of a generic Enriques surface is not contained in a quadric.  The rest is discussed in \cite{DM}.
\end{proof}

\section{Complete intersection of type (2,4) that contains a Reye Enriques and its small resolutions}\label{sec2}
For the bulk of the paper we consider the following geometric data.

Let $V$ be a complex vector space of dimension four. Let $W$ be 
a generic four-dimensional subspace in $V^\dual\otimes V^\dual$. 
It will be often convenient to identify $V$ with the space of column vectors and think of $W$ as generated by four linearly independent $4\times 4$ matrices. We will
further separate the symmetric and skew parts of the matrices so that
$W$ is generated by $A_i+B_i$, $i=1,\ldots,4$ with $A_i=A_i^*$, $B_i=-B_i^*$.

\subsection{Reye Models}
To $W\subset V^\dual\otimes V^\dual$ above, we naturally associate a web of quadrics 
$W_+$ on $\PP V$
generated by the symmetric parts of elements of $W$ (i.e. by $A_i$ in our explicit presentation).

We further assume for the rest of this section that $W_+$ induces a regular web of quadrics in $\mathbb P^3=\mathbb P(V)$ (for the definition and properties of such
regular webs see Section \ref{Reye models}), and consider the Reye model of the generic 
nodal Enriques surface, denoted $Re(W_+)$. 
Recall that $W_+$ defines a 3-dimensional linear system 
of symmetric $(1,1)$ divisors on $\PP^3
\times \PP^3$ whose base locus is a smooth K3 surface $Y_{W_+}$.  
The Enriques surface $S_{W_+}$ is defined as the quotient of $Y_{W_+}$ by the 
involution that interchanges the copies of $\PP^3$.

There is a morphism $Y_{W_+} \rightarrow
\mathbb P(\bigwedge^2 V)$ 
which factors through the Enriques surface $S_{W_+}$ and defines a closed embedding 
 $$\pi:S_{W_+}\to \PP(\Lambda^2V).$$ 
 The
projective model $Re(W_+)$ is the image of $S_{W_+}$, under this morphism.
It is a smooth Enriques surface in $\mathbb P^5$ contained in the Grassmannian $G(2,4)$ under the Pl\"ucker embedding.  
The embedding
$\pi$ corresponds to a polarization $\Delta$ of degree 10 on $S_{W_+}$, and in fact $Re(W_+)$ is a Fano model of the
generic nodal Enriques surface $S_{W_+}$ [see Sections \ref{Reye models} and \ref{Fano Models}].

From Section \ref{Fano equations} we know that the homogeneous ideal of $Re(W_+)$ is generated by 
the Pl\"ucker quadric $Q$ defining $G(2,4)$ and four cubics $C_1,\ldots,C_4$. 
 Let $X$ be a (2,4) complete
intersection cut out by $Q$ and a generic element $P$ of degree four in the ideal 
of $Re(W_+)$.

\begin{theorem}\label{construction} For general $P$ we have

(1) $X$ is an irreducible threefold whose singular locus consists of 58 ordinary double points
all of which lie on $Re(W_+)$.

(2) There is a small resolution $\pi_1:X^1\rightarrow X$ 
of the ordinary double points, with $X^1$ a
non-singular Calabi-Yau threefold, obtained by blowing up $X$ along $Re(W_+)$.

(3) There is another small resolution $\pi_0:X^0 \rightarrow X$ 
of the ordinary double points, with
$X^0$ 
a non-singular Calabi-Yau threefold, and such that there is a map 
$X^0\rightarrow \PP^1$ 
whose generic fiber is a K3 surface. It contains $Re(W_+)$ as a double fiber.

(4) $\chi(X^1)=\chi(X^0)=-60$, 
$h^{1,1}(X^1)=h^{1,1}(X^0)=2$, and
$h^{2,1}(X^1)=h^{2,1}(X^0)=32$. \end{theorem}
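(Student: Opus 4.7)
The plan is to handle (1) by combining Bertini's theorem with a Chern-class count, to produce the two small resolutions as blow-ups along Weil divisors through the nodes, and to deduce the Hodge invariants from Euler-characteristic arithmetic together with the Calabi-Yau relation $\chi = 2(h^{1,1}-h^{1,2})$. For (1), Bertini applied to the linear system $|\Ia_{Re(W_+)/G(2,4)}(4)|$ ensures that for generic $P$ the threefold $X$ is smooth away from $Re(W_+)$. Along $Re(W_+)$, a point $p$ is singular on $X$ precisely when the differential of $P$ vanishes modulo $T_p Re(W_+) \subset T_p G(2,4)$, that is, when the section of the rank-$2$ bundle $E := N^\dual_{Re(W_+)/G(2,4)} \otimes \Oa_{G(2,4)}(4)|_{Re(W_+)}$ induced by $P$ vanishes. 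For generic $P$ this section is transverse, producing a $0$-dimensional zero locus of length $c_2(E)$. The identity $c(T_{G(2,4)}) = (1+h)^6/(1+2h) = 1+4h+7h^2+\cdots$, together with $c_1(T_{Re(W_+)}) = 0$ in ${\rm Num}(S)\otimes\QQ$, $c_2(T_{Re(W_+)}) = \chi_{\rm top}(S) = 12$, and the Fano degree $h^2\cdot[Re(W_+)] = 10$, yields $c_1(N) = 4h$ and $c_2(N) = 70-12 = 58$, whence $c_2(E) = 58 - 16\cdot 10 + 16\cdot 10 = 58$. For generic $P$ the induced quadratic form in normal directions at each zero has maximal rank, so each singularity is an ordinary double point, and irreducibility follows from $X$ being a Cartier divisor in the connected $G(2,4)$ together with being regular in codimension one.

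For (2), I would realize $X^1$ as the proper transform of $X$ inside the smooth blow-up $\tilde G \to G(2,4)$ along $Re(W_+)$. Since $P$ vanishes to order one on $Re(W_+)$, a chart-by-chart calculation shows that $X^1$ is smooth and that the induced map $X^1 \to X$ replaces each of the 58 nodes by a $\PP^1$ while being an isomorphism elsewhere. The Calabi-Yau property then assembles as follows: $\omega_X \cong \Oa_X$ by adjunction for a $(2,4)$ complete intersection in $\PP^5$; crepancy of small resolutions gives $\omega_{X^1} \cong \Oa_{X^1}$; and $R^i\pi_{1,*}\Oa_{X^1} = 0$ for $i>0$, together with the Koszul resolution of $X \subset \PP^5$, gives $h^i(\Oa_{X^1}) = h^i(\Oa_X) = 0$ for $i = 1,2$. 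For (3), $X^0$ is produced analogously by blowing up a second Weil divisor through the 58 nodes, chosen to pick out the opposite small resolution at each node. The fibration $X^0 \to \PP^1$ is induced by the pencil of quartics spanned by $P$ and an appropriately chosen element of $|\Ia_{Re(W_+)}^2(4)|$ modulo the quadric $Q$; a generic fiber is a quartic section of $X$ residual to $Re(W_+)$, hence a K3 surface of degree $10$, while the distinguished member with $Re(W_+)$ as its reduction realizes $Re(W_+)$ as a double fiber. I expect part (3) to be the main obstacle: one has to exhibit the pencil and the second Weil divisor class concretely, and verify that the resolution of the pencil's base locus is genuinely a small crepant modification selecting the opposite resolution at each node, rather than a divisorial one.

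For (4), a smooth $(2,4)$ complete intersection has $c_3(T_X) = -22 h^3$ (from $c(T_X) = (1+h)^6/((1+2h)(1+4h))$) and $h^3\cdot[X] = 8$, giving $\chi_{\rm smooth} = -176$. Each ordinary double point contributes $+1$ to the Euler characteristic when passing from the smoothing to $X$, and a further $+1$ when passing from $X$ to a small resolution (since the Milnor fiber of a threefold ODP has Euler characteristic $0$ and the exceptional $\PP^1$ has Euler characteristic $2$), so $\chi(X^1) = \chi(X^0) = -176 + 2\cdot 58 = -60$. Because both small resolutions are Calabi-Yau threefolds, $\chi = 2(h^{1,1}-h^{1,2})$, and it suffices to verify $h^{1,1} = 2$. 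For $X^1$, this reduces to the defect-type statement that the Picard group is generated rationally by the hyperplane class and the class of the proper transform of $Re(W_+)$ (equivalently, the common class of the $58$ exceptional $\PP^1$s), i.e.\ that the $58$ nodes impose exactly one fewer than $58$ independent linear conditions on $|\Ia_{Re(W_+)}(4)|$. The corresponding statement for $X^0$ follows from the pencil structure in (3), and then $h^{1,2} = h^{1,1} + (-\chi/2) = 2 + 30 = 32$.
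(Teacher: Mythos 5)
Your Chern-class count in (1) is essentially the paper's: your rank-two bundle $N^\dual_{Re(W_+)/G(2,4)}\otimes\Oa(4)$ is exactly the quotient bundle $V$ in the paper's sequence $0\to\Oa(2)\to(\Ia/\Ia^2)(4)\to V\to 0$, and both give $c_2=58$. But two of your steps are asserted rather than proved, and one would fail as stated. First, in (1) you claim that for generic $P$ the section of $E$ is transverse and that the ``quadratic form in normal directions has maximal rank.'' Neither is automatic: the quartics in the ideal are a specific, finite-dimensional linear system, and one must show they generate enough of the bundle. The paper handles this either by a \textit{Macaulay2} example plus semicontinuity, or by a genuinely geometric route: the strong Bertini theorem of Diaz--Harbater locates the singularities as the points over which the strict transform in the blow-up $\tilde G\to G(2,4)$ contains a whole exceptional fiber $C$, and then a normal-bundle computation ($N_{C\subset S}\cong\Oa_{\PP^1}(-1)$, $N_{S\subset\tilde X}|_C\cong\Oa_{\PP^1}(-1)$, ${\rm Ext}^1=0$) forces $N_{C\subset\tilde X}\cong\Oa(-1)^{\oplus 2}$, whence Reid's result (5.13(b) of the Pagoda paper) gives ordinary double points. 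You need some version of this argument.

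Second, and more seriously, your construction of the K3 fibration in (3) is wrong. The fiber class of the fibration on $X^0$ is $2E$, where $E$ is the class of the Enriques surface itself, and $H^2\cdot 2E=20$; no residual quartic section has this degree ($32-10=22$ if you remove $Re(W_+)$ once, $32-20=12$ if twice), and indeed $|2E|$ cannot be cut by hypersurfaces from $\PP^5$ since any such divisor class has $H^2$-degree divisible by $8$. The paper's route is different and intrinsic: $X^0$ is obtained by flopping the $58$ exceptional curves of $X^1\to X$; since those curves are the exceptional $(-1)$-curves of the blown-up Enriques surface $\pi_1^{-1}(Re(W_+))$, the flop blows them back down, so $X^0$ contains a copy of $Re(W_+)$ as a smooth surface. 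Then a general lemma (the paper's Appendix 2) shows that for any Enriques surface $E$ inside a Calabi-Yau threefold, $h^0(2E)=2$ and $|2E|$ is a base-point-free pencil of K3 surfaces with $E$ as the reduction of a double fiber; this is a cohomology computation from $\Oa_E(E)\cong\omega_E$, not a projective-geometry construction. Finally, in (4) you correctly reduce to $h^{1,1}=2$ but do not prove it; the paper's clean argument is that $X^1$ is an ample divisor (in the class $4H-E$, interior to the nef cone spanned by $H$ and the base-point-free $3H-E$) on the blow-up $\widehat{Gr}$, which has Picard rank $2$, so the Lefschetz hyperplane theorem applies.
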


\begin{proof} (1) Since $X$ is a complete intersection, it is certainly connected.  To see that its singular
locus is as claimed for general $P$, we first restrict ourselves to the singular locus along $Re(W_+)$, that is
$Sing(X)\cap Re(W_+)$.  We note that the equations defining $X$, $Q$ and $P$, define sections of
$(\mathcal I/\mathcal I^2)(2)$ and $(\mathcal I/\mathcal I^2)(4)$, respectively, where $\mathcal I$ denotes the
ideal sheaf of $Re(W_+)$ in $\mathbb P^5$.  A local calculation shows that $Sing(X)\cap Re(W_+)$ is precisely the
subscheme where these two sections are linearly dependent.  To calculate the degree of this subscheme we note that
using the standard exact sequence for tangent and normal bundles and the Euler exact sequence on $\mathbb P^5$ we
find that $c((\mathcal I/\mathcal I^2)(4))=1+6H+138[{\rm pt}]$ up to numerical equivalence, where $H$ is the hyperplane
class on $Re(W_+)$.  The section $Q$ of $(\mathcal I/\mathcal I^2)(2)$ induces the following short  exact
sequence \[0\rightarrow \mathcal O(2)\rightarrow (\mathcal I/\mathcal I^2)(4)\rightarrow V\rightarrow 0.\] Taking
Chern classes we find that $c_2(V)=58$ is precisely the degree of the subscheme where these two sections are
linearly dependent.  So we would expect to have 58 singularities along $Re(W_+)$ counted with multiplicities.  
However, a \textit{Macaulay2} calculation gives examples for which $X$ has precisely 58 ordinary double
points, and thus this is true for general parameter choices, for example by semi-continuity of Milnor numbers.  Since the singular locus has dimension 0, we see that indeed $X$
is an irreducible threefold.

For the sake of completeness, we mention a direct geometric proof that the singular locus is as claimed for generic choice of parameters.  Consider the restriction of the linear system of cubics containing the Enriques surface $Re(W_+)$ to the Grassmanian $G(2,4)$ containing it.  Then $Re(W_+)$ is the scheme-theoretic base locus of this linear system.  This is true also of the linear system $\Lambda$ of quartics containing $Re(W_+)$.  Let $\pi:\tilde{G}\rightarrow G$ be the blow up of $G=G(2,4)$ along $Re(W_+)$ with exceptional divisor $E$.  Then $|\pi^*\Lambda-E|$ is base-point free, and by Bertini's theorem a generic member $\tilde{X}$ of this linear system is nonsingular.  Thus the strict transform of $X$, the intersection with $G(2,4)$ of a generic quartic containing $Re(W_+)$, is nonsingular.  Moreover, according to Bertini's theorem the singular locus of $X$ occurs along the base-locus of the linear system, namely along $Re(W_+)$.

According to Theorem 2.1 and Claim 2.2 of \cite{DH}, the singular locus of $X$ has codimension 2 on $Re(W_+)$, and these finite number of points are precisely the points of $X$ over which $\tilde{X}$ contains an entire fiber of $\pi$.  Now restricting $|\pi^*\Lambda-E|$ to $E$ preserves base-point freeness and thus the intersection with $E$ of a generic member of this linear system is a nonsingular surface $S$ by Bertini.  Let $C\subset \tilde{X}$ be one of the fibers of $\pi$ contracted to a singular point of $X$.  Then consider the normal bundle exact sequence, $$0\rightarrow N_{C\subset S}\rightarrow N_{C\subset \tilde{X}}\rightarrow N_{S\subset \tilde{X}}|_C\rightarrow 0.$$  Since $S$ is a smooth surface mapped birationally to smooth $Re(W_+)$ by $\pi$, $N_{C\subset S}$ is $\mathcal O_{\mathbb P^1}(-1)$.  Furthermore, since $S$ is the intersection of an element of $|\pi^*\Lambda-E|$ with $|E|$, the normal bundle $N_{S\subset \tilde{X}}$ is the restriction of $\mathcal O_{\tilde{X}}(E)$ to $E$ and then to $S$.  But since $E$ is the exceptional fibre of the blow-up of a smooth variety along a smooth subvariety, $\mathcal O_{\tilde{X}}(E)|_E\cong \mathcal O_E(-1)$.  Thus $N_{S\subset \tilde{X}}=\mathcal O_S(-1)$.  Restricting to $C$ gives $\mathcal O_{\mathbb P^1}(-1)$.  Since $\text{Ext}^1(\mathcal O_{\mathbb P^1}(-1),\mathcal O_{\mathbb P^1}(-1))=0$, we see that $N_{C\subset \tilde{X}}$ must be $\mathcal O_{\mathbb P^1}(-1)\oplus\mathcal O_{\mathbb P^1}(-1)$.  

It follows from (5.13, (b)) in \cite{Pagoda} that these curves are contracted to ordinary double points.  The number of nodes is then calculated as above, using the Chern class calculation.

(2 and 3) According to Lemma \ref{small res}, blowing up $X$ along $Re(W_+)$ gives a small resolution
$X^1\rightarrow X$ of the ordinary double points with $X^1$ a nonsingular Calabi-Yau threefold.  
Simultaneously flopping the 58 exceptional $\mathbb P^1$'s we obtain a second small resolution $X^0$.  Since
$Re(W_+)$ contained the 58 nodes, the blow-up when restricted to $Re(W_+)$ is simply the usual blow-up of smooth
points, and therefore flopping these exceptional curves has the effect of blowing down the (-1)-curves on
$\pi_1^{-1}(Re(W_+))$ (see (5.13) of \cite{Pagoda}).  Therefore, $X^0$ contains a copy of $Re(W_+)$, and we get the claimed fibration by Proposition 
\ref{CYfibration}.

(4) By Batyrev's theorem on the birational invariance of Hodge numbers for Calabi-Yau's \cite{Bat99}, it suffices to check
these claims on $X^1$.  The claim about the topological Euler characteristic follows immediately from the
fact that a generic (2,4) complete intersection has Euler characteristic -176, and from
the number of nodes.  
The calculation of the Hodge numbers follows from the results of Section \ref{Hodge} and \textit{Macaulay2} calculations.  An alternative calculation of the Hodge numbers will be given in Remark \ref{alternative hodge}.
\end{proof}

\begin{remark} It was observed in  \cite{CR} that an Enriques surface in its Fano embedding in $\PP^5$ 
is contained in a quadric if and only if this surface is a Reye Enriques. Thus our construction describes a general
CY $(2,4)$ intersection in $\PP^5$ that contains an Enriques surface in its Fano embedding. 
\end{remark}

\subsection{Alternate description as a determinantal variety}\label{alternate}

We offer here a different, but very useful description of $X$ as a determinantal variety
inside the Grassmannian $G(2,V)$. The Reye model $Re(W_+)$ is then the set of lines $l
\subset \PP V = \mathbb P^3$ such that the image of ${\rm Span}(\{A_i\}_{i=1,...,4})$ under the restriction map to the subspace $\CC^2\subset V$ corresponding
to $l$ has dimension at most 2  (Remark 5.5 in \cite{DM}).  The cubics containing $Re(W_+)$ modulo the ideal of the Grassmannian can be described
very explicitly.  For a given plane $\mathbb C^2\subset \mathbb C^4$ representing our line $l\in \PP V$
choose a basis \[v=\left(\begin{matrix} a_1 \\ a_2 \\a_3 \\a_4 \end{matrix}\right), w=\left( \begin{matrix} b_1
\\b_2\\b_3\\b_4\end{matrix}\right),\] and consider the $4 \times 3$ matrix

 \begin{equation}\label{matrix}
 \left( \begin{matrix} v^* A_1 v & v^*
A_1 w & w^* A_1 w\\ v^* A_2 v & v^* A_2 w & w^* A_2 w\\ v^* A_3 v & v^* A_3 w & w^* A_3 w\\ v^* A_4 v & v^* A_4 w &
w^* A_4 w \end{matrix} \right).
\end{equation}
 The four $3 \times 3$ minors of this matrix give degree 6 equations in the
$a_i,b_i$, and writing these minors in terms of the Pl\"{u}cker coordinates of $l$ gives 4 cubic equations, defined modulo the ideal of the Grassmannian.  These are the 4 cubics that cut out $Re(W_+)$ inside $G(2,4)$.

To present the determinantal description of $X$ we need the following lemma: 
\begin{lemma} 
The natural map
\[H^0({\rm Gr}(2,4),\mathcal O_{{\rm Gr}(2,4)}(1))\otimes H^0({\rm Gr}(2,4),\mathcal I(3))\rightarrow H^0({\rm Gr}(2,4),\mathcal I(4)),\]
given by multiplication, is an isomorphism, where $\mathcal I$ is the ideal sheaf of $Re(W_+)$ in $G(2,4)$. 
\end{lemma}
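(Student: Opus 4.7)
The plan is to prove surjectivity of the multiplication map directly from the generation of the ideal of $Re(W_+)$, and then match dimensions on both sides to obtain injectivity.

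\emph{Left-hand side.} From Section~\ref{Fano equations}, the homogeneous ideal of $Re(W_+)\subset \mathbb P^5$ is generated by the Pl\"ucker quadric $Q$ together with four cubics $C_1,\ldots,C_4$, and $\dim H^0(\mathbb P^5,\mathcal I_{Re(W_+)/\mathbb P^5}(3))=10$. Combining the short exact sequence on $\mathbb P^5$
\[
0 \to \mathcal O_{\mathbb P^5}(1) \xrightarrow{\cdot Q} \mathcal I_{Re(W_+)/\mathbb P^5}(3) \to \mathcal I_{Re(W_+)/G}(3) \to 0
\]
with $H^1(\mathbb P^5, \mathcal O(1))=0$ yields $\dim H^0({\rm Gr}(2,4),\mathcal I(3)) = 10-6 = 4$, so the LHS of the multiplication map has dimension $6\cdot 4 = 24$. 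Moreover, modulo $Q$ the ideal of $Re(W_+)$ in ${\rm Gr}(2,4)$ is generated in degree three by the images $\bar C_1,\ldots,\bar C_4$, so every element of $H^0({\rm Gr}(2,4),\mathcal I(4))$ can be written as $\sum \ell_i \bar C_i$ with $\ell_i \in H^0({\rm Gr}(2,4),\mathcal O(1))$; this is exactly surjectivity of the multiplication map.

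\emph{Right-hand side.} It remains to show $\dim H^0({\rm Gr}(2,4),\mathcal I(4)) = 24$. The analogous exact sequence in degree four together with $H^1(\mathbb P^5, \mathcal O(2))=0$ reduces matters to computing $\dim H^0(\mathbb P^5,\mathcal I_{Re(W_+)}(4))$ and subtracting $\dim H^0(\mathcal O_{\mathbb P^5}(2))=21$. Riemann--Roch on the Enriques surface with $\Delta^2=10$ and $K_{Re(W_+)}$ being two-torsion gives $\chi(4\Delta)=\tfrac12(4\Delta)^2+1=81$, while Kodaira vanishing applied to $4\Delta-K_{Re(W_+)}$ (ample since it is numerically equivalent to $4\Delta$) produces $h^i(4\Delta)=0$ for $i\geq 1$, hence $h^0(4\Delta)=81$. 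The remaining input is $4$-normality, which I would establish via Castelnuovo--Mumford regularity: $\mathcal I_{Re(W_+)/\mathbb P^5}$ is $4$-regular because $H^1(\mathcal I(3))=0$ (the $3$-normality cited from \cite{GLM}), $H^2(\mathcal I(2))\cong H^1(Re(W_+),2\Delta)=0$ by Kodaira, $H^3(\mathcal I(1))\cong H^2(Re(W_+),\Delta)\cong H^0(K_{Re(W_+)}-\Delta)^\dual=0$ since $K-\Delta$ is not effective, and $H^i(\mathcal I(4-i))=0$ for $i\geq 4$ for dimension reasons. Four-regularity then forces $H^1(\mathcal I(n))=0$ for all $n\geq 3$, giving $\dim H^0(\mathbb P^5,\mathcal I(4))=126-81=45$ and $\dim H^0({\rm Gr}(2,4),\mathcal I(4))=45-21=24$.

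\emph{Conclusion and main obstacle.} Surjectivity combined with matching dimensions $\dim({\rm LHS})=\dim({\rm RHS})=24$ forces the multiplication map to be an isomorphism. The only non-formal step is the $4$-normality of $Re(W_+)$, which is the main obstacle; however, Castelnuovo--Mumford regularity reduces it to standard Kodaira-type vanishings on the Enriques surface together with the $3$-normality already established in \cite{GLM}, so no essentially new input is required.
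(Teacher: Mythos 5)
Your argument is correct, but it is a genuinely different route from the paper's: the paper's entire proof of this lemma is a \textit{Macaulay2} verification for a generic $W_+$, whereas you give a conceptual, computer-free proof. Your two inputs beyond formal exact-sequence manipulations are exactly the ones the paper already has on hand elsewhere: the generation of the homogeneous ideal of $Re(W_+)$ by the Pl\"ucker quadric and four cubics (quoted from \cite{DM} in Section 2.3), which gives surjectivity of the multiplication map onto $(I_{Re(W_+)})_4/(Q\cdot S_2)=H^0({\rm Gr}(2,4),\mathcal I(4))$, and the $3$-normality of Fano models from \cite{GLM}, which you feed into the Castelnuovo--Mumford regularity criterion (together with routine Kodaira/Serre-duality vanishings, all legitimate since the Fano polarization $\Delta$ is ample for generic $W_+$ and $K$ is numerically trivial) to get $4$-normality and hence $h^0(\mathbb P^5,\mathcal I(4))=126-81=45$, so both sides have dimension $24$. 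The dimension counts all check ($6\cdot 4=24$ on the left, $45-21=24$ on the right). What your approach buys is a proof valid for every regular web with ample $\Delta$ rather than a verification at one generic point, and it is very much in the spirit of the paper, whose authors explicitly thank the referee for help removing computer calculations from the proof of Theorem 3.1; what the \textit{Macaulay2} check buys is brevity and independence from the ideal-generation statement of \cite{DM}. The one thing worth making explicit if you write this up is the identification $H^0({\rm Gr}(2,4),\mathcal I(d))\cong (I_{Re(W_+)})_d/(Q\cdot S_{d-2})$ used in the surjectivity step, which follows from $H^1(\mathbb P^5,\mathcal O(d-2))=0$ and the saturatedness of the homogeneous ideal; as stated your surjectivity sentence slightly conflates the saturated ideal on the Grassmannian with the ideal generated by the $\bar C_i$, though the gap closes immediately.
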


\begin{proof} One may verify this for generic $W_+$ using \textit{Macaulay2}. \end{proof}

Now from the lemma we see that the choice of a quartic hypersurface in $\mathbb P^5$ that contains $Re(W_+)$ is
a choice 
of a sum of products of a linear form and a $3\times 3$ minor of the matrix above.  But a linear form
$l(x_0,...,x_5)$ on $\mathbb P^5=\mathbb P(\bigwedge^2 V)$ corresponds to a skew-form $B$ on $V$, whose
restriction to the plane $\mathbb C^2$ is determined by $v^* Bw$.  Thus we have four skew-forms $B_1,..,B_4$, and
we may express our quartic as \[\det \left( \begin{matrix} v^* B_1 w&v^* A_1 v & v^* A_1 w & w^* A_1 w\\
v^* B_2 w&v^* A_2 v & v^* A_2 w & w^* A_2 w\\
v^* B_3 w&v^* A_3 v & v^* A_3 w & w^* A_3 w\\
v^* B_4 w&v^* A_4 v & v^* A_4 w & w^* A_4 w \end{matrix}\right ) .\]

Thus we get the following description:

\begin{proposition} 
Let $Q$ be the universal quotient bundle on $G(2,4)$.  Take a generic dimension four
subspace of sections of  $Q\otimes Q$.  Then the nodal Calabi-Yau threefold $X$ 
is 
the determinantal variety on $G(2,4)$ given by \[\det \left(\begin{matrix} s_1 \\
s_2\\s_3\\s_4\end{matrix}\right).\]
\end{proposition}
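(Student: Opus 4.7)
The plan is to reinterpret the explicit $4\times 4$ matrix displayed just before the proposition as the matrix of coordinates of four evaluations of sections of $Q\otimes Q$, and then verify that its vanishing locus coincides with the defining condition of $X$. Throughout I adopt the convention in which the fiber of $Q$ at $V_1\in G(2,V)$ is $V_1^\dual$, so that sections of $Q\otimes Q$ evaluate at $V_1$ to bilinear forms on $V_1$.

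First, from the dualized tautological exact sequence on $G(2,V)$ together with standard Bott vanishing one obtains
\[H^0(G(2,V),Q\otimes Q)\cong V^\dual\otimes V^\dual,\]
the identification sending a bilinear form $M$ on $V$ to the section $V_1\mapsto M|_{V_1^\dual\otimes V_1^\dual}$. Under this isomorphism, the generators $M_i=A_i+B_i$ of the four-dimensional subspace $W$ correspond to four sections $s_1,\ldots,s_4$ spanning a four-dimensional subspace of $H^0(Q\otimes Q)$. Since the fiber $V_1^\dual\otimes V_1^\dual$ is itself four-dimensional, the four values $s_i(V_1)$ are linearly dependent if and only if the restriction map $W\to V_1^\dual\otimes V_1^\dual$ has image of dimension at most three, which is precisely the defining condition of $X$ recalled in the introduction.

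This linear dependence locus is the zero scheme of the single section
\[\det(s_1,s_2,s_3,s_4)\in H^0(G(2,V),\det(Q\otimes Q))=H^0(G(2,V),\Oa(4)),\]
using $c_1(Q\otimes Q)=4c_1(Q)$ together with the identification of $\det Q$ as the Pl\"ucker hyperplane class. This recovers $X$ as the intersection of $G(2,V)$ with a quartic, consistent with the original $(2,4)$ description. To match this intrinsic determinant with the explicit formula displayed above, I would choose a local basis $v,w$ of the tautological rank-2 subbundle; then $V_1^\dual\otimes V_1^\dual$ is spanned by the duals of $v\otimes v,\,v\otimes w,\,w\otimes v,\,w\otimes w$, and the coordinates of $s_i(V_1)$ in this basis are the four entries $v^* A_iv$, $v^* A_iw$, $w^* A_iw$, and $v^* B_iw$, with the symmetric and skew parts of $M_i$ contributing three symmetric entries and one skew entry respectively. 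Arranging these as rows recovers the displayed matrix.

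The principal technical obstacle is the identification $H^0(G(2,V),Q\otimes Q)\cong V^\dual\otimes V^\dual$ together with the line-bundle calculation $\det(Q\otimes Q)\cong\Oa(4)$, both of which hinge on carefully fixing the conventions for $Q$ versus its dual. Once these are in hand, the remainder is a direct matching of coordinates between the intrinsic section $\det(s_1,\ldots,s_4)$ and the $4\times 4$ determinant of the preceding display.
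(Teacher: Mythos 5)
Your proposal is correct and follows essentially the same route as the paper: the paper's proof likewise identifies each row of the displayed $4\times 4$ matrix with the value at $V_1$ of a section of $Q\otimes Q$ (via $H^0(Q\otimes Q)\cong V^\dual\otimes V^\dual$, recorded in the remark following the proposition) and notes that such determinants are exactly the quartics in the ideal of $Re(W_+)$ by the preceding lemma. The only cosmetic difference is that you phrase the dependence locus via the rank-drop definition of $X$ from the introduction and spell out the fiber-basis bookkeeping (the symmetric/skew splitting of $V_1^\dual\otimes V_1^\dual$), while the paper runs the argument from the quartic-in-the-ideal description; these are the same computation read in opposite directions.
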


\begin{proof} Indeed the only thing left to note is that $X$ was cut out on the Grassmannian by a quartic
containing $Re(W_+)$.  The choice of such a quartic was seen to be given by a $4\times 4$ determinant like the one
preceding the proposition.  Each row combined the restrictions of a skew-symmetric and symmetric matrix to a $\CC^2\subset \CC^4$ in such a way that we get
a section of $Q\otimes Q$.  This gives the desired description. \end{proof}

\begin{remark}
Observe that $H^0({\rm Gr}(2,V),Q\otimes Q)$ is naturally isomorphic to $V^\dual\otimes V^\dual$.
Moreover, the above description of the subspace means that it can be viewed as the 
subspace $W$ of $V^\dual\otimes V^\dual$ considered in the beginning of the section.
\end{remark}

\begin{remark}\label{scaleskew}
It is clear that one can scale the $B_i$-s simultaneously without affecting the determinantal
variety. In invariant terms, this is due to the fact that $Q\otimes Q$ is the direct sum of 
symmetric and skew parts. The corresponding naive parameter count for the moduli space of $X$, 
$\dim {\rm Gr}(4,V^\dual\otimes V^\dual)-\dim PGL(V) - 1 = 48-15-1=32$, is in fact correct from Theorem \ref{construction}.
\end{remark}

\subsection{The fibration structure} In this subsection we will give an explicit way of 
seeing the fibration structure on $X^0$. In the process we will see that the K3 fibers
of this fibration are determinantal quartics in $\PP^3$.

We again start by considering a space $W\subseteq V^\dual\otimes V^\dual$.
We will identify $V$ with the column vectors of length four for convenience.
Consider the intersection inside $\PP^3\times \PP^3\times \PP^1$ of the 
four $(1,1,1)$ divisors given by 
$$
x^*(sA_i+tB_i)y=0,~{\rm for~all~}i.
$$
Here $x$ and $y$ denote column vectors and $(s:t)\in\PP^1$.
Note that this space contains the irreducible component $\Delta=\{(x,x,(0:1))|x\in \PP^3\}$. Denote by $Z$ 
the closure of the complement of this component in $\PP^3\times \PP^3\times \PP^1$.  A \textit{Macaulay2} calculation shows that $Z$ is smooth.  Observe that $Z$ is preserved by the involution $\tau$ that sends
$$\tau:(x,y,(s:t))\mapsto (y,x,(s:-t)).$$ 

\begin{remark}
It is standard that the variety $Z$ is fibered over $\PP^1$ with fibers that are isomorphic to determinantal 
quartics in $\PP^3$. The fiber over $(1:0)$ is the symmetric K3 surface $Y_{W_+}$ which
is the double cover of $S_{W_+}$ considered in \ref{Reye models}. The involution $\tau$ acts 
on the base of this fibration, so that $Z/\tau$ maps to $\PP^1$. The 
reduction of the fiber of this map 
over $(1:0)$ is the Enriques surface $S_{W_+}$.
\end{remark}

\begin{theorem}
The quotient space $Z/\tau$ is naturally birational to $X$.
\end{theorem}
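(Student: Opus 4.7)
The plan is to show that the natural Plücker map
\[
(x,y,(s:t))\longmapsto [x\wedge y]\in\PP(\bigwedge^2 V)
\]
defines a rational map from $Z$ to $X$ which factors through $Z/\tau$ and is generically one-to-one.

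First I would observe that this map is a morphism on the open set $\{x\neq y\}\subseteq Z$, automatically lands in the Grassmannian $G(2,V)\subset\PP(\bigwedge^2 V)$, and is symmetric in $x,y$, so it factors through $Z/\tau$. To show the image lies in $X$, I use the $4\times 4$ determinantal description of $X$ from Section \ref{alternate}: setting $v=x$ and $w=y$ in that matrix, the condition $(x,y,(s:t))\in Z$ reads $s(x^*A_iy)+t(x^*B_iy)=0$ for each $i$. This forces the column $(x^*B_iy)_i$ to be proportional to the column $(x^*A_iy)_i$, or makes one of them vanish when $st=0$, so the $4\times 4$ determinant vanishes.

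The key step is computing the generic fiber. Fix a generic $l\in X$ with associated $2$-plane $V_1={\rm span}(v,w)\subseteq V$. Any preimage has $x=\alpha v+\beta w$ and $y=\gamma v+\delta w$ with $\alpha\delta\neq\beta\gamma$. Setting $p_i=v^*A_iv$, $q_i=v^*A_iw$, $r_i=w^*A_iw$, $\sigma_i=v^*B_iw$, the symmetry of $A_i$ and skew-symmetry of $B_i$ give
\[
x^*A_iy=\alpha\gamma\,p_i+(\alpha\delta+\beta\gamma)\,q_i+\beta\delta\,r_i,\qquad x^*B_iy=(\alpha\delta-\beta\gamma)\,\sigma_i.
\]
Existence of a valid $(s:t)$ is equivalent to the vectors $a=(x^*A_iy)_i$ and $b=(x^*B_iy)_i$ in $\CC^4$ being linearly dependent. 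By the determinantal description and genericity of $l$, the kernel of the linear map $(\lambda,\mu,\nu,\rho)\mapsto \lambda\sigma+\mu p+\nu q+\rho r$ is one-dimensional, giving a unique relation $\lambda\sigma+\mu p+\nu q+\rho r=0$ in $\CC^4$. Reducing the dependence condition modulo $\CC\cdot\sigma$ shows that $(x,y)$ lifts to a point of $Z$ over $l$ if and only if
\[
(\alpha\gamma:\alpha\delta+\beta\gamma:\beta\delta)=(\mu:\nu:\rho)\quad\text{in }\PP^2,
\]
and that $(s:t)$ is then uniquely determined by the proportionality constant.

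The crucial observation is that the map $\PP V_1\times\PP V_1\to\PP^2$ sending $((\alpha:\beta),(\gamma:\delta))\mapsto (\alpha\gamma:\alpha\delta+\beta\gamma:\beta\delta)$ is exactly the degree-$2$ quotient of $\PP^1\times\PP^1$ by the swap involution, namely the symmetric-square map $\PP^1\times\PP^1\to{\rm Sym}^2\PP^1\cong\PP^2$ which records the coefficients of the unordered quadratic $(\beta X-\alpha Y)(\delta X-\gamma Y)$. Therefore the fiber of $Z\dashrightarrow X$ over a generic point of $X$ consists of exactly two points interchanged by $\tau$, and the induced map $Z/\tau\dashrightarrow X$ has generic fiber a single point. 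Combined with the dimension equality $\dim(Z/\tau)=\dim X=3$ and the irreducibility of both sides, this yields the asserted birational equivalence. The main delicate point is verifying the genericity assumptions cleanly (that $\sigma$ is nonzero, that the above kernel is one-dimensional, and that $(\mu:\nu:\rho)$ avoids the branch locus of the Segre quotient), but all of these are open conditions which hold on a non-empty open set by the regular-web hypothesis on $W_+$.
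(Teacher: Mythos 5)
Your proposal is correct, and while it sets up the same forward map and verifies the image lands in $X$ in essentially the same way, it establishes generic injectivity by a different mechanism: you compute the fiber of the forward map over a generic point of $X$, whereas the paper constructs an explicit inverse. In the paper's argument, for generic $V_1\in X$ one takes a generator $l=l_++l_-$ of the one-dimensional annihilator of the span of the restrictions of the $A_i+B_i$ to $V_1$, splits it into symmetric and skew parts, and observes that the pencil $tl_++sl_-$ meets the quadric of decomposable tensors in $\PP(V_1\otimes V_1)$ in exactly two points $x\otimes y$ and $y\otimes x$; the preimage is then $(x,y,(s:t))$, well defined modulo $\tau$. Your kernel vector $(\lambda,\mu,\nu,\rho)$ is precisely this annihilator written in the basis $v,w$ (with $\lambda$ encoding the skew part and $(\mu,\nu,\rho)$ the symmetric part), and your condition $(\alpha\gamma:\alpha\delta+\beta\gamma:\beta\delta)=(\mu:\nu:\rho)$ is exactly the condition that $x\otimes y$ be proportional to some member of that pencil, so the two sheets of your symmetric-square map $\PP^1\times\PP^1\to\PP^2$ are the paper's two intersection points with the quadric. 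The trade-off is clear: the paper's explicit inverse is reused immediately afterwards (in the remark following the theorem) to locate the indeterminacy along $Re(W_+)$ and to see that the map lifts to $X^0$, while your fiber count gets generic injectivity without exhibiting the inverse but must then invoke irreducibility of $Z$ and equality of dimensions to conclude birationality --- a point the paper sidesteps by having mutually inverse maps in hand. Your verification that the image lies in $X$, via linear dependence of two columns of the $4\times 4$ determinantal matrix, is if anything more direct than the paper's appeal to the codimension-one subspace $x^*(-)y=0$ of $V_1^\dual\otimes V_1^\dual$ combined with the observation that the $B_i$ may be rescaled simultaneously. The genericity caveats you flag at the end (nonvanishing of $\sigma$, the restriction space having dimension exactly three, and avoidance of the branch conic) are the same degenerations the paper disposes of in its subsequent remark, so leaving them as open conditions is consistent with the paper's level of rigor.
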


\begin{proof}
Let us define the two birational maps which are inverses of each other.
First we describe the map from $Z/\tau$ to $X$. The point
$$
(x,y,(s:t))
$$
is sent to $\CC x\oplus \CC y\subset\CC^4$ which gives 
a point in ${\rm Gr}(2,V)$.  Clearly, this map is well-defined modulo $\tau$ away from $Z\cap \Delta$.

Observe that the restrictions of $sA_i+tB_i$ 
to $V_1=\CC x\oplus \CC y\subset\CC^4$ lie in the codimension one subspace of $V_1^{\vee}\otimes V_1^{\dual}$ defined by $x^*(-)y=0$. As a consequence, the image lies in the determinantal variety in $G(2,V)$ given by the $sA_i+tB_i$. In view of Remark \ref{scaleskew},
this implies that the image lies in $X$.

In the opposite direction, let $V_1\subseteq V$ be a dimension two subspace such
that ${\rm Span}(A_i+B_i)$ restricts to a proper subspace of $V_1^\dual\otimes V_1^\dual$. We may assume $V_1$ is generic  in $X$ so that the space of restrictions is of dimension three. Consider a generator of its annihilator  $l\in V_1\otimes V_1$. Split it into symmetric and skew-symmetric parts $l=l_++l_-$. Then $tl_++sl_-$ annihilates $sA_i+tB_i$ for all $i$ and $(s:t)\in \PP^1$. For a generic point $V_1$ corresponding to a point in $X$ and generic $(s:t)\in \PP^1$, the tensor $tl_++sl_-$ will be indecomposable.  The space of decomposable tensors
is a quadric in $\PP(V_1\otimes V_1)$. Consider $(s:t)$ such that $tl_++sl_-$ is decomposable,
i.e. $tl_++sl_-=x\otimes y$. We see that $tl_+-sl_-=y\otimes x$, and these are the 
only decomposable linear combinations of $l_+$ and $l_-$. We then map
the point of $X$ that corresponds to $V_1$ to $(x,y,(s:t))$ for any decomposable
$tl_++sl_-=x\otimes y$, with the choice of a decomposable linear combination irrelevant
modulo $\tau$.

It is easy to see that these two constructions are inverses of each other.
\end{proof}

\begin{remark}
Observe that the line through $l_+$ and $l_-$ lies in the quadric of decomposable
tensors if and only if $l_-=0$ and $l_+$ is decomposable, i.e. $l_+=x\otimes x$.
This means that $x^*A_ix=0$ for all $i$ which does not happen for a $W$ with regular web $W_+$.  Thus the only ambiguity in the above map occurs at the points where the dimension of ${\rm Span}(A_i+B_i)|V_1$ drops by $2$ (i.e. along $Re(W_+)$). In this case, one needs to get a choice of $l$ that annihilates all $A_i+B_i$. This means that the map lifts to a map from $X^0$.
\end{remark}

\begin{remark} Based upon computational evidence, we believe that $Z/\tau$ is in fact isomorphic to $X^0$ blown up at the two points corresponding to the two-dimensional subspaces $\CC x\oplus \CC y$ such that $x^*B_i y=0$ for all $i$. 
\end{remark}

\section{A birational model of $X$ as a nodal complete intersection in  the weighted projective 
space}\label{secwp}
In this section we continue to investigate the birational geometry of $X^1$. We find that it maps to
a certain nodal complete intersection of type $(4,4)$ inside a weighted projective space 
$\PP(1,1,1,1,2,2)$. As before, we consider four generic $4\times 4$ symmetric (resp. skew) matrices
$A_i$ (resp. $B_i$).

Let $(u_1:u_2:u_3:u_4: y : z)$ be homogeneous coordinates on the weighted projective space
 $\PP(1,1,1,1,2,2)$ of the respective
weights. Consider the complete intersection of two degree $4$ hypersurfaces in this weighted projective
space defined by the property that 
\begin{equation}\label{formal}
(y+z q + {\rm Pf}(\sum_i u_iB_i)q^2)(y-z q +{\rm Pf}(\sum_i u_iB_i)q^2)=\det(\sum_i u_i(A_i+qB_i))
\end{equation}
as polynomials in the formal variable $q$.
Note that the right hand side is an even polynomial in $q$. The coefficient at $q^0$ is
$\det(\sum_i u_iA_i)$ and the coefficient at $q^4$ is $\det(\sum_i u_iB_i) = ({\rm Pf}(\sum_i u_iB_i))^2$.
The middle coefficient is therefore $\det(\sum_i u_i(A_i+B_i))-\det(\sum_i u_iA_i)-({\rm Pf}(\sum_i u_iB_i))^2$.
The equation \eqref{formal} thus reduces to two equations
\begin{equation}\label{ciwp}
\left\{
\begin{array}{lcl}
y^2&=&\det(\sum_i u_iA_i)\\
z^2&=&-\det(\sum_i u_i(A_i+B_i))+(y-{\rm Pf}(\sum_i u_iB_i))^2 
\end{array}
\right.
\end{equation}
of degree $4$ in $\PP(1,1,1,1,2,2)$.
\begin{definition}
Denote by $\hat X$ the complete intersection defined by \eqref{ciwp}.
\end{definition}

\begin{theorem}\label{wpmodel}
Let $H=\pi_1^*\mathcal O_{X}(1)$ and $E$ the class of $\pi_1^{-1}(Re(W_+))$ inside 
$N^1(X^1)$, the Neron-Severi group of $X^1$.  The ray $\RR_{\geq 0}(3H-E)$ in $\tilde{N}^1(X^1)=N^1(X^1)\otimes \mathbb R$ defines a small contraction
of $X^1$ to the complete intersection $\hat X$ in $\PP(1,1,1,1,2,2)$ defined above.  
\end{theorem}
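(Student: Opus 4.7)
The plan is to construct the contraction $\phi: X^1 \to \hat X$ explicitly, as the morphism into $\PP(1,1,1,1,2,2)$ given by sections of $L := 3H - E$ and $2L$ on $X^1$. Since $\hat X$ sits in $\PP(1,1,1,1,2,2)$ with weight-one coordinates $u_1,\ldots,u_4$ and weight-two coordinates $y,z$, it suffices to produce four sections $u_i \in H^0(X^1, L)$ and two sections $y, z \in H^0(X^1, 2L)$ satisfying the defining equations \eqref{ciwp} of $\hat X$, such that the resulting map is birational and contracts only a locus of codimension $\geq 2$. Equivalently, one wants to identify $\bigoplus_{n \geq 0} H^0(X^1, nL)$ with the homogeneous coordinate ring of $\hat X$.

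For the weight-one coordinates I would use the four cubics $C_1,\ldots,C_4$ generating the ideal of $Re(W_+)$ modulo the Grassmannian quadric $Q$---explicitly the $3\times 3$ minors of the matrix \eqref{matrix} from Section \ref{alternate}. Each $C_i$ vanishes to first order along $Re(W_+) \subset X$ and so pulls back via $\pi_1$ to a section $u_i \in H^0(X^1, 3H - E)$. For the weight-two coordinates, the Laplace relations satisfied by the minors $C_i$ and the rows of \eqref{matrix} force the symmetric form $\sum_i u_i A_i$ to vanish identically on the 2-plane $V_1 \subset V$ corresponding to each point of $X^1$. Thus $V_1$ is a totally isotropic 2-plane in the quadric $Q_u := \{v^*(\sum u_i A_i) v = 0\} \subset \PP V$, and for generic $u$ it lies in exactly one of the two rulings of $Q_u$. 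This canonical ruling choice varies algebraically with the point of $X^1$ and lifts $X^1$ to the natural double cover $\{y^2 = \det(\sum u_i A_i)\} \to \PP^3$ branched along the quartic symmetroid, producing a well-defined $y \in H^0(X^1, 2L)$. An analogous construction using the full pencil $\sum u_i(A_i + qB_i)$, with $q$ parameterizing the base $\PP^1$ of the fibration $X^0 \to \PP^1$, produces $z$; equivalently, the polynomial identity \eqref{formal} expresses $\det(\sum u_i(A_i+qB_i))$ as a product of two quadratic polynomials in $q$ whose coefficients are precisely $y$ and $z$, so the equations \eqref{ciwp} hold by construction.

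Having produced $\phi$, I would verify that it is everywhere defined by a local check that $u_i, y, z$ have no common zero, focusing on the 58 exceptional $\PP^1$'s of $\pi_1$ and the residual base locus of $|L|$. Birationality then follows from an explicit inverse: given a generic $(u, y, z) \in \hat X$, the sign of $y$ selects one of the two rulings of $Q_u$, and $z$ then singles out $V_1$ within that ruling, reconstructing the point of $X^1$ in much the same way as in the inverse constructed in the proof of $Z/\tau \dashrightarrow X$ in Section 3.3. Finally, for smallness: the contracted locus consists of irreducible curves $C \subset X^1$ with $(3H - E) \cdot C = 0$, and one checks that these form a finite union of rational curves mapped to the singular points of $\hat X$, hence of codimension $\geq 2$.

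\textbf{Main obstacle.} The most delicate step is the global existence of $y$ (and by symmetry $z$) as a section of $2L$ on $X^1$. The ruling choice determines $y$ only locally, and its assembly into a single-valued global section amounts to the triviality of the double cover $\{y^2 = \det(\sum u_i A_i)\}$ pulled back to $X^1$. This should follow from simple-connectedness of $X^1$ (as a Calabi--Yau with $h^{1,0}=0$) together with a divisor-parity check on $\det(\sum u_i A_i)$, but a conceptually cleaner route may be to compute $h^0(X^1, 2L) = 12$ directly and identify $y, z$ as the two sections lying outside the image of $\mathrm{Sym}^2 H^0(X^1, L) \to H^0(X^1, 2L)$.
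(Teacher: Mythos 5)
Your overall strategy coincides with the paper's: both realize the contraction as the map to $\PP(1,1,1,1,2,2)$ given by the four minors $u_i\in H^0(X^1,3H-E)$ together with two sections $y,z\in H^0(X^1,6H-2E)$ satisfying \eqref{ciwp}, and both reduce the identification with the ray contraction to understanding the ring $\bigoplus_n H^0(X^1,nL)$. The genuine gap sits exactly where you flag the ``main obstacle,'' and neither of your proposed fixes closes it. The ruling-plus-monodromy route requires that the pullback of $\det(\sum_i u_iA_i)$ to $X^1$ have everywhere even divisor (so that the induced double cover is \'etale after normalization and then splits by simple connectedness); but that parity statement is precisely the nontrivial content to be proved --- it is not a formal consequence of $V_1$ being isotropic for $\sum_i u_iA_i$ --- and the construction of $z$ ``by analogy'' is even less determined, since $z$ is a square root of an expression involving $y$ itself. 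The dimension-count route ($h^0(2L)=12$ versus $\dim{\rm Sym}^2H^0(L)=10$) shows there are two extra sections but provides no canonical choice of $y,z$ and no way to verify that they satisfy \eqref{ciwp}. The paper's resolution is a concrete algebraic device (Appendix 3): for $T$ a frame of $V_1$ and $S$ an auxiliary $4\times 2$ matrix, $\det(S^*\sum_iu_i(A_i+qB_i)T)$ is divisible by $\det(T\vert S)$, and the quotient $\hat y+\hat zq+{\rm Pf}(\sum_i u_iB_i)q^2$ defines $\hat y,\hat z$ as manifestly global regular functions of the frame scaling by $(\det U)^6$; Proposition \ref{tn} then yields the factorization \eqref{formal}, hence \eqref{ciwp}, identically. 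You would need this or an equivalent globally algebraic construction.

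Two further steps your sketch elides. First, showing that $\hat y,\hat z$ vanish to order $2$ along $E$, i.e.\ lie in $H^0(2L)$ rather than merely $H^0(6H)$: the paper deduces this from integrality of $\hat y,\hat z$ over $\CC[u_1,\ldots,u_4]$ together with integral closedness of $\bigoplus_nH^0(X^1,nL)$, not from a local order-of-vanishing computation. Second, identifying the map as \emph{the} contraction of the ray $\RR_{\geq 0}(3H-E)$ requires that $u_i,\hat y,\hat z$ generate the full section ring; the paper gets this by matching the Hilbert series $(1+t^2)^2/(1-t)^4$ of a $(4,4)$ complete intersection in $\PP(1,1,1,1,2,2)$ against the dimensions $h^0(nL)$ computed from Riemann--Roch and Kawamata--Viehweg vanishing. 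Your smallness argument also presupposes that $\hat X$ has only finitely many singular points, which the paper establishes ($42$ nodes) by direct computation rather than deducing abstractly.
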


\begin{proof}
We first calculate the intersection product on $N^1(X^1)$. We have $\int_{X^1}H^3=8$, $\int_{X^1}H^2E=10$, $\int_{X^1}HE^2=0$,
$\int_{X^1}E^3=-58$. The last statement follows because $E$ is the blowup of an Enriques surface at $58$ points
and $E\vert_E$ is the sum of the $58$ exceptional lines of this blowup.
We also have from the adjunction formula
$12+58=\int_E c_2(E)=\int_{X^1} E\frac {1+c_2(X^1)+c_3(X^1))}{(1+E)}=\int_{X^1} c_2(X^1)E+E^3=\int_{X^1}c_2(X^1)E-58$ so that $\int_{X^1} c_2(X^1)E=128$.  We calculate from the Riemann-Roch formula that $\chi(H)=\int_{X^1} \frac 16 H^3 +\frac 1{12}c_2(X^1)H$, and since $H$ is nef, Kawamata-Viehweg vanishing gives $\chi(H)=h^0(X^1,H)=6$. Thus we get $\int_{X^1}c_2(X^1)H = 12(6-\frac 86)=56$.

Consider the divisor $L=3H-E$ on $X^1$. Notice that $\int_{X^1}L^3=4$. We also have $\int_{X^1}c_2(X^1)L=40$. Thus, $\chi(kL)=\frac 23k^3+\frac {10}3k$, so in particular $\chi(L)=4$, $\chi(2L)=12$. 
We can view $X^1$ as a divisor on the blowup $\widehat{Gr}$
of ${\rm Gr}(2,4)$ along $Re(W_+)$, by viewing it as the proper preimage
of $X$ under the blowup.  Since the ideal of $Re(W_+)$ on $G(2,4)$ is generated by four cubics, see \ref{Fano equations}, the divisor $3H-E$ is base-point-free on $\widehat{Gr}$ and thus on $X^1$.  Consequently, $3H-E$ is nef, thus $h^0(L)=4$, $h^0(2L)=12$.

We can describe the global sections of $L$ and $2L$ in more detail. As in 
subsection \ref{alternate}
we can describe the sections of $L$ as $3\times 3$ minors of the $4\times 3$ matrix of restrictions 
\eqref{matrix}. For a point in ${\rm Gr}(2,V)$ given by a vector space $V_1$ these minors $u_i$ are such 
that $\sum_i u_iA_i$ restricts to $0$ on $V_1$. In fact, $\widehat{Gr}$ is naturally embedded into
${\rm Gr}(2,V)\times \PP^3$ and the map by $\vert L\vert$ is simply the projection to the second factor
(restricted to $X^1\subset \widehat{Gr}$).  Points in $X^1$ can be thought of as 
two-dimensional subspaces $V_1\subset V$ together with a choice of $(u_1:\ldots:u_4)\in \PP^3$ 
such that $\sum_iu_i(A_i+B_i)$ restricts trivially on $V_1$. Note that this implies $\sum_i u_i(A_i+qB_i)$
restricts trivially to $V_1$ for any $q$.

Pick a basis of $V_1$ to represent it by a $4\times 2$ matrix $T$. We have 
$$
T^*\sum_i u_i (A_i+qB_i) T = {\bf 0}.
$$
We will now use the results of Section  \ref{linalg}.
Consider for given $W$, $V_1$, $u$, and $q$,
$$
\det (S^*\sum_i u_i (A_i+qB_i) T )
$$
as a function of $S$, where $S$ is a $4\times 2$ matrix. If the dimension of the span of columns of $S$ and $T$ is less than four, then there is a linear combination of columns of $S$ that lies in that of $T$. This implies that there is a linear combination of the rows of 
$S^*\sum_i u_i (A_i+qB_i) T$ which is zero, thus the determinant above vanishes. Since this determinant is a 
polynomial function of $S$, it must be divisible by the irreducible polynomial $\det(T\vert S)$, where $T\vert S$ is the $4\times 4$ matrix obtained by juxtaposing $T$ and $S$. Since the $S$-degree
of both polynomials is $2$, the ratio depends only on $W$, $V_1$, $u$, and $q$. The dependence on $q$ is 
clearly quadratic. Moreover, the coefficient by $q^2$ is equal to ${\rm Pf}(\sum u_i B_i)$, see 
Proposition \ref{skew}. Thus 
we have 
$$
\det (S^*\sum_i u_i (A_i+qB_i) T) = (\hat y+\hat zq+{\rm Pf}(\sum u_i B_i)q^2)\det(T\vert S).
$$
for some $\hat y$ and $\hat z$.
By taking a transpose and switching $q\to -q$ we get
$$
\det (T^*\sum_i u_i (A_i+qB_i) S )= (\hat y-\hat zq+{\rm Pf}(\sum u_i B_i)q^2)\det(T\vert S).
$$
By Proposition \ref{tn} this implies 
$$
(\hat y+\hat zq+{\rm Pf}(\sum u_i B_i)q^2)(\hat y-\hat zq+{\rm Pf}(\sum u_i B_i)q^2) = \det (\sum_i u_i(A_i+qB_i)).
$$
Let us now investigate the dependence of $\hat y$ and $\hat z$ on the choice of the basis
of $V_1$. A different choice of basis leads to a matrix $T_1=TU$ for some $2\times 2$
invertible matrix $U$. Observe that $u_i$ are sections of $L=3H-E$ and can be 
naturally viewed as sections of $3H$, i.e. they scale by  $(\det U)^3$.
By scaling $S\mapsto SU$ we see that $\hat y$ and $\hat z$ 
scale by $(\det U)^6$. They thus descend from functions on the frame bundle over 
$X^1$ to sections of $6H$. 

We now observe that $\hat y$ and $\hat z$ are sections of $2L=6H-2E$, because 
they are integral over $\CC[u_1,\ldots,u_4]$ and $\oplus_{n\geq 0}H^0(X^1,nL)$
is integrally closed. As a result, $(u_1,...,u_4,\hat y,\hat z)$ give a map from $X^1$ to the complete intersection $\hat X$. 
We have checked using \textit{Macaulay2} that $\hat X$ is nodal with $42$ ordinary double points. It is also easy to
see that $X^1\to \hat X$ has three-dimensional image. Consequently, the subring of $\oplus_n H^0(X^1,nL)$
generated by $u_i$, $\hat y$ and $\hat z$ has the same Hilbert series as a complete intersection of type $(4,4)$
in $\PP(1,1,1,1,2,2)$, i.e. $H(t)=\frac {(1+t^2)^2}{(1-t)^4}$. Since this coincides with the dimensions of 
$H^0(X^1,nL)$ obtained via Riemann-Roch and Kawamata vanishing, we see that $u_1,\ldots,u_4,\hat y,\hat z$
generate $\oplus_n H^0(X^1,nL)$, and $X^1\to \hat X$ is a contraction that corresponds to the ray
$\RR_{\geq 0}(3H-E)$. Observe that this divisor is not ample, because $\hat X$ is singular. 
\end{proof}

\begin{remark}
It is instructive to see explicitly the birational map from 
$\hat X$ to $X^1$ which is the inverse of $X^1\to \hat X$. Let $(u_1,\ldots,u_4,y,z)$ satisfy \eqref{ciwp}.
We are working birationally and can thus assume that ${\rm Pf}(\sum_i u_iB_i)\neq 0$. 
Consider two roots $q_1$ and $q_2$ of 
$$y+zq+{\rm Pf}(\sum_i u_iB_i)q^2=0$$ 
From \eqref{ciwp} we see that $\sum_i u_i(A_i+q_1B_i)$ 
is degenerate. Consider its right kernel, call it $x_1\in V$. Similarly, we can consider the right kernel $x_2$
of $\sum_i u_i(A_i+q_2B_i)$. Then the point in $X^1$ is given (generically) by ${\rm Span}(x_1,x_2)$. Indeed these elements lie in $V_1$ by Proposition \ref{kernel}.
\end{remark}

\begin{remark}
The map from $\widehat {Gr}$ defined by multiples of $3H-E$ is interesting in its own right.
One can show that it maps onto the hypersurface in $\PP(1,1,1,1,2)$ given by 
$$
y^2=\det(\sum_i u_iA_i).
$$
The generic fibers are rational curves which correspond to a ruling of the smooth quadric $\sum_i u_iA_i$ in $\PP V$.  The choice of sign in $y$ (locally) distinguishes the two rulings.  
\end{remark}

\begin{remark}\label{alternative hodge}
From the preceding remark and the proof of the preceeding theorem, we may now present a purely geometric, non-computer-based proof that the Picard rank of $X^1$ is 2.  The blow-up $\widehat{Gr}$ of $Gr(2,4)$ along the Enriques surface $Re(W_+)$ necessarily has Picard rank 2 as the blow-up of a smooth variety of Picard rank 1 along a smooth subvariety.  We've seen that $3H-E$, in the notation above, is base-point free and thus nef.  Moreover, since we know that $3H-E$ is nef but not ample, and likewise for $H$, we see that $4H-E$ lies in the interior of the cone spanned by these two rays and thus must be ample.  By the Lefschetz hyperplane theorem it follows that $X^1$ also has Picard rank 2 as a generic member of the ample linear system $|4H-E|$.  
\end{remark}
\section{Birational models of $X^1$}\label{bir}
It is known that the Picard group $\Pic(X)$ of a smooth Calabi-Yau variety $X$ is isomorphic to its Neron-Severi group $N^1(X)$.  Its rank $\rho(X)$ is called the Picard number.   We denote the nef cone of $X$ by $\overline{\rm Amp}(X)$.  It consists of those Cartier $\mathbb R$-divisors that have non-negative intersection with every effective curve in $X$.  Its relative interior is the ample cone ${\rm Amp}(X)$, the convex cone generated by ample divisors.  We also write $\overline{\rm Mov}(X)$ for the convex cone generated by the set of movable divisors, effective divisors that move in a linear system without fixed components.  Its interior is denoted ${\rm Mov}(X)$.  Since a birational map between Calabi-Yau varieties $X$ and $Y$ would have indeterminacy locus of codimension at least two, their divisor vector spaces $\tilde{N}^1(X)$ and $\tilde{N}^1(Y)$ are identified.  From the following lemma (Lemma 1.5 in \cite{Kaw97}) we see that their nef cones coincide under this identification if and only if the birational map between them is an isomorphism, and otherwise they are disjoint:
\begin{lemma}\label{kaw}
Let $X$ be a smooth Calabi-Yau variety, and denote its birational automorphism group by $Bir(X)$.  
Then $g\in Bir(X)$ is a biregular automorphism if and only if there exists an ample divisor $H$ on $X$ such that
$g^*H$ is ample.
\end{lemma}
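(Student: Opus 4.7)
The forward direction---biregular $g$ sends ample to ample---is immediate, so the content is the converse. My plan is as follows. Given $H$ ample on $X$ and $g^*H$ ample, I would start by resolving the indeterminacy of $g$: let $\tilde\Gamma$ be the normalization of the closure of the graph of $g$ in $X \times X$, with projections $\pi_1, \pi_2\colon \tilde\Gamma \to X$, and let $\phi\colon W \to \tilde\Gamma$ be a resolution of singularities. Write $p = \pi_1 \circ \phi$ and $q = \pi_2 \circ \phi$, birational morphisms from the smooth $W$ to $X$ satisfying $q = g \circ p$ wherever $g$ is regular.

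The Calabi-Yau hypothesis enters next. For any proper birational morphism $f\colon W \to X$ from a smooth variety to a smooth variety with trivial canonical class, the relative canonical divisor $K_W - f^*K_X = K_W$ is an effective $f$-exceptional divisor charging every exceptional prime with positive multiplicity. Applied to both $p$ and $q$, this gives $K_W = E_p = E_q$, so the $p$- and $q$-exceptional prime divisors coincide as sets. This coexceptionality is the essential Calabi-Yau feature---without it the conclusion fails, as illustrated by the standard Cremona involution on $\mathbb P^2$, which pulls $\mathcal O(1)$ back to $\mathcal O(2)$ but is not biregular.

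Now consider $D := q^*H - p^*(g^*H)$ on $W$. Over the open locus where $g$ is regular, $D$ vanishes, so $D$ is supported on the $p$-exceptional locus, and by the previous paragraph also on the $q$-exceptional locus. Since $H$ is nef, for every $p$-contracted curve $C$ one has $D \cdot C = q^*H \cdot C \geq 0$, so $D$ is $p$-nef; the Negativity Lemma forces $-D \geq 0$. Running the same argument on the other side, using the nefness of $g^*H$, shows $-D$ is $q$-nef and hence $D \geq 0$. Combining, $D = 0$, i.e.\ $p^*(g^*H) = q^*H$; pushing forward along $\phi$ yields $\pi_1^*(g^*H) = \pi_2^*H$ on $\tilde\Gamma$.

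To finish, suppose some curve $C \subset \tilde\Gamma$ is contracted by $\pi_1$. Then $\pi_2^*H \cdot C = \pi_1^*(g^*H) \cdot C = 0$, and the ampleness of $H$ forces $\pi_2(C)$ to be a point as well. Hence $(\pi_1, \pi_2)(C)$ is a single point of $X \times X$, contradicting the embedding $\tilde\Gamma \hookrightarrow X \times X$. So $\pi_1$ contracts no curves, hence has finite fibers; being birational with normal target, Zariski's Main Theorem gives that $\pi_1$ is an isomorphism, and by symmetry so is $\pi_2$. Therefore $g = \pi_2 \circ \pi_1^{-1}$ is biregular. I expect the main technical obstacle to be the double application of the Negativity Lemma together with making the coexceptionality argument rigorous on $W$; once $D = 0$ is established, the finite-fiber conclusion follows readily.
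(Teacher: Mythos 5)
Your proof is correct and complete: the paper itself does not prove this lemma but simply cites it as Lemma 1.5 of \cite{Kaw97}, and your argument --- common resolution $p,q\colon W\to X$, coexceptionality of the two contractions forced by triviality of $K_X$, the two-sided application of the negativity lemma to conclude $p^*(g^*H)=q^*H$, and Zariski's main theorem to see that both projections from the graph are isomorphisms --- is precisely the standard proof of that cited result. The only cosmetic point is that the normalization $\tilde\Gamma\to X\times X$ is a finite map rather than literally an embedding, which is all your final contradiction actually needs.
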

On the other hand, this isomorphism between $\tilde{N}^1(X)$ and $\tilde{N}^1(Y)$ identifies $\overline{\rm Mov}(X)$ and $\overline{\rm Mov}(Y)$.  Moreover, for a Calabi-Yau threefold $X$ it follows from the existence and termination of flops (see \cite{Kaw88}) that the movable cone is covered by the nef cones of all birational models of $X$ that are isomorphic outside of a locus of codimension at least 2.  This arrangement is called the moveable fan.  

We now consider the vector space $\tilde{N}^1(X^1)=N^1(X^1)\otimes \mathbb R$ for the smooth Calabi-Yau varieties $X^1$ constructed above.  It is a real vector space of dimension 2 by Theorem \ref{construction}. We uncovered in Theorem \ref{construction} two smooth birational models $X^0$ and $X^1$ of $X$.  On $X^0$ we found the divisor $E$ whose ray $\mathbb R_{\geq 0} E$ induced a fibration of $X^0$ over $\mathbb P^1$ so that $E$ must be nef but clearly not ample.  It thus forms an extremal ray of $\overline{\rm Amp}(X^0)$.  The other extremal ray is generated by $H$ which is nef as the pull back from $X$ of the nef (even very ample) line bundle $\mathcal O_X(1)$, but which cannot be ample as it contracts the 58 exceptional $\mathbb P^1$'s.  The variety $X^1$, the flop of $X^0$ by these curves, has $H$ as one of its extremal rays for the same reason.  As we noted in the proof of Theorem \ref{wpmodel}, the divisor $3H-E$ is nef but not ample so it forms the other extremal ray.  We can now complete the analysis and describe explicitly the entire movable cone and all of the other smooth birational models.

First note that by two results of Kawamata on a Calabi-Yau threefold $X$, any $\phi\in Bir(X)$ can be decomposed into a sequence of flops followed by an automorphism of $X$ at the last stage, and any flop of $X$ corresponds to an extremal ray of $\overline{\rm Amp}(X)$ (see Theorem 5.7 in \cite{Kaw08} and Theorem 1 in \cite{Kaw88}, respectively).  Thus $\mathbb R_{\geq 0} E$ forms an extremal ray of the entire movable cone since it induces a fibration, which cannot be flopped.

Now consider the map from $X^1\to \hat X$. It is a resolution of $42$ nodes of $\hat X$. We can flop it to obtain another resolution $X^2\to \hat X$. Note also that $\hat X$ admits an involution 
$$
\sigma: (u_1:\ldots:u_4:y:z)\mapsto (u_1:\ldots:u_4:y:-z).
$$
By a \textit{Macaulay2} calculation, there are $22$ singular points of $\hat X$ that are fixed by this 
involution (the remaining $20$ are pairs of points where $y=0$ and $\sum u_iA_i$ is a rank $2$ quadric on $V$, i.e. they lie over the Hessian surface of $W_+$ which has precisely 10 nodes \cite{DM}).  Note that $\sigma$ lifts to a birational automorphism of $X^1$ and thus acts 
on $N^1(X^1)$.

\begin{proposition}
The action of $\sigma$ on $N^1(X^1)$ is nontrivial. Specifically,
it sends $$(H,E)\mapsto (17H-6E, 48H-17E).$$
\end{proposition}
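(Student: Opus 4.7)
The plan is to combine the $\sigma$-equivariance of the contraction $X^1\to \hat X$ with the involution property of $\sigma$ to reduce the matrix of $\sigma^*$ to a one-parameter family, and then to pin down the remaining parameter via an explicit computation from the birational formula for $\sigma$.

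First, by Theorem \ref{wpmodel} the morphism $X^1\to\hat X$ is given by the graded ring $\bigoplus_{n\geq 0}H^0(X^1,n(3H-E))$, which is the homogeneous coordinate ring of $\hat X$. Since $\sigma$ acts on $\hat X$ via $(u_i,y,z)\mapsto(u_i,y,-z)$, it preserves this graded ring, so $\sigma^*(3H-E)=3H-E$ in $N^1(X^1)$. Together with $(\sigma^*)^2=\operatorname{id}$ (because $\sigma\circ\sigma=\operatorname{id}_{\hat X}$ as a birational automorphism), writing $\sigma^*H=aH+bE$ and $\sigma^*E=cH+dE$ forces $c=3a-3$ and $d=3b+1$, and then the involution condition yields either $\sigma^*=\operatorname{id}$ or $a=-3b-1$. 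Modulo verifying nontriviality, $\sigma^*$ is therefore determined by a single integer parameter $b$.

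To pin down $b=-6$ (and simultaneously verify nontriviality) I would use the explicit birational description of $\sigma$ from the Remark after Theorem \ref{wpmodel}: $\sigma$ sends a generic $V_1=\operatorname{Span}(x_1,x_2)\in X^1$ to $V_1'=\operatorname{Span}(y_1,y_2)$, where $y_j$ is the left kernel of $\sum_i u_i(A_i+q_jB_i)$ and $q_1,q_2$ are the roots of $y+zq+\operatorname{Pf}(\sum_i u_iB_i)q^2=0$. Each $y_j$ is built out of $3\times 3$ minors of a matrix linear in $(u,q_j)$, hence has components of bi-degree $(3,3)$ in $(u_i,q_j)$, so naively the Pl\"ucker coordinates of $V_1'$ — the components of $y_1\wedge y_2$ — give a section of $18H-6E$. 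After factoring out the antisymmetric factor $q_1-q_2$ and expressing the remaining symmetric part via the elementary symmetric functions $q_1+q_2=-z/\operatorname{Pf}$ and $q_1q_2=y/\operatorname{Pf}$ together with the defining relations of $\hat X$, a common divisor of class $H$ can be extracted, leaving a well-defined rational map $X^1\dashrightarrow\PP^5$ whose pullback of $\mathcal O(1)$ is $\sigma^*H=17H-6E$. The formula $\sigma^*E=3\sigma^*H-(3H-E)=48H-17E$ then follows from the relation in the first step.

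The main obstacle is identifying and rigorously dividing out the common factor of class $H$ in the Pl\"ucker coordinates of $V_1'$, which reflects the special structure of the Reye Enriques surface $Re(W_+)$ and of the quadratic defining $q_1,q_2$. An alternative, in the computational spirit of the arguments elsewhere in the paper, is to verify the matrix entries directly in \textit{Macaulay2} by resolving the indeterminacy of $\sigma: X^1\dashrightarrow X^1$ and numerically tracking the pullback of a generic hyperplane section.
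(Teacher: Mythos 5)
Your reduction to a one‑parameter family is sound and coincides with the paper's: the contraction $X^1\to\hat X$ is $\sigma$‑equivariant, so $\sigma^*$ fixes $L=3H-E$, and since $(\sigma^*)^2=\mathrm{id}$ the map is either trivial or has second eigenvalue $-1$, forcing $\sigma^*H=kL-H$ for a single unknown integer. The gap is in the two remaining steps. First, nontriviality: you defer it to the explicit computation, whereas it needs (and has) an independent argument — by Lemma \ref{kaw} a trivial action on $N^1(X^1)$ would make $\sigma$ a biregular automorphism of $X^1$, but $\sigma$ cannot extend over the $22$ exceptional lines lying above the $\sigma$‑fixed nodes of $\hat X$, since locally at each such node it interchanges the two small resolutions. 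Second, and more seriously, the determination of the parameter: your plan to read off $\sigma^*H$ from the Pl\"ucker coordinates of ${\rm Span}(y_1,y_2)$ hinges on two unproved assertions, namely that the naive class of $y_1\wedge y_2$ (after symmetrizing in $q_1,q_2$) is exactly $18H-6E$ and that the common factor to be removed has class exactly $H$ rather than, say, $2H-E$ or nothing at all. Since the answer $17H-6E$ is precisely the difference of these two classes, the step you flag as "the main obstacle" is the entire content of the proposition; as written, nothing pins down $k$.

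The paper closes this gap with a one‑line numerical argument that you could substitute directly: because $\sigma$ is induced by an automorphism of $\PP(1,1,1,1,2,2)$, the degree of the image of a member of $|H|$ in the weighted projective space is preserved, i.e. $\sigma^*H\cdot L^2=H\cdot L^2$. Using the intersection numbers computed in the proof of Theorem \ref{wpmodel} ($H^3=8$, $H^2E=10$, $HE^2=0$, $E^3=-58$, hence $H\cdot L^2=12$ and $L^3=4$), the ansatz $\sigma^*H=kL-H$ gives $4k-12=12$, so $k=6$ and $\sigma^*H=17H-6E$, with $\sigma^*E=3\,\sigma^*H-L=48H-17E$ exactly as in your final step. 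I would recommend replacing the Pl\"ucker computation (or the proposed \textit{Macaulay2} verification) with this degree argument.
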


\begin{proof}
If this map was trivial, then it would act as an automorphism of $X^1$ by Lemma \ref{kaw}.
However, it does not extend to a morphism on any of the $22$ exceptional lines 
of $X^1\to \hat X$ whose images are fixed by $\sigma$ because on $\hat X$ it switches the two resolutions locally around the fixed nodes.

Clearly, $L=3H-E$ is fixed by the involution since this divisor corresponds to $\mathcal O_{\hat{X}}(1)$ and the involution comes from one on $\mathbb P(1,1,1,1,2,2)$.  Because it acts 
nontrivially, the other eigenvalue is $(-1)$ and we have $H\mapsto k(3H-E)-H$ for some $k$.  Since the degree of $H$ as a subvariety of $\PP(1,1,1,1,2,2)$ must be preserved by the involution, the intersection number $H(3H-E)^2$ must be preserved.  This implies $k=6$ and finishes the proof.
\end{proof}

We observe that  $X^2$ is actually isomorphic to $X^1$ via the lift of the involution on $\hat X$.  
Specifically, the birational map $g: X^1 \rightarrow \hat{X}\rightarrow \hat{X}\dashrightarrow X^2$, obtained by applying the involution on $\hat{X}$ in the middle, maps the nef cone $\overline{\rm Amp}(X^1)$ to an adjacent cone, which must then be the cone of $X^2$, since both contain $L$.
As a consequence, the nef cone of $X^2$ is generated by $3H-E$ and $17H-6E$.  The contraction that corresponds to $17H-6E$ contracts $58$ smooth $\PP^1$s.  These can be flopped to get a $K3$ fibration via $96H-34E$, with $48H-17E$ a smooth Enriques surface which is the reduction of a double fiber of the fibration.  This gives the

\begin{corollary}
The birational models of $X$ look as in Figure 1.  Moreover, $\overline{\rm Mov}(X)$ is the convex cone generated by $E$ and $48H-17E$.
\begin{figure}[tbh]
\begin{center}
\includegraphics[scale = .6]{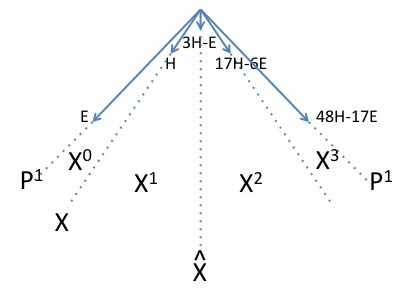}
\end{center}
\caption{Birational models of $X$}
\end{figure}
Here $X^{i}$ are depicted inside their nef cones. The picture is slightly distorted for readability.
\end{corollary}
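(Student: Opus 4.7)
The plan is to assemble the pieces constructed above into the full movable fan in the two-dimensional vector space $\tilde{N}^1(X^1)$, and to verify that no birational models lie outside what we exhibit. From the preceding results we already know two chambers: $\overline{\rm Amp}(X^0)=\RR_{\geq 0}E+\RR_{\geq 0}H$, with $E$ the K3-fibration ray and $H$ contracting the $58$ exceptional curves of $X^0\to X$; and $\overline{\rm Amp}(X^1)=\RR_{\geq 0}H+\RR_{\geq 0}(3H-E)$, with $3H-E$ contracting the $42$ curves above the nodes of $\hat{X}$. These meet along $\RR_{\geq 0}H$ through the $58$-curve flop $X^0\dashrightarrow X^1$.

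To go further I would flop $X^1$ across the $42$-curve wall $3H-E$ to construct $X^2$, the other small resolution of $\hat{X}$. The preceding proposition then gives an isomorphism $X^2\cong X^1$ via the lift of $\sigma$, and since $\sigma^*$ sends the nef cone of the source to a chamber adjacent to $\overline{\rm Amp}(X^1)$ across $\RR_{\geq 0}(3H-E)$, one reads off $\overline{\rm Amp}(X^2)=\RR_{\geq 0}(3H-E)+\RR_{\geq 0}(17H-6E)$. I would then apply $\sigma$ to the chain $X^0\dashrightarrow X^1$ to produce $X^3\cong X^0$ with $\overline{\rm Amp}(X^3)=\RR_{\geq 0}(17H-6E)+\RR_{\geq 0}(48H-17E)$; the outer ray $\RR_{\geq 0}(48H-17E)$ induces a K3-fibration $X^3\to\PP^1$ with Enriques half-fiber, namely the $\sigma$-image of the K3-fibration on $X^0$ from Proposition \ref{CYfibration}.

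The main obstacle is showing the enumeration is complete. For this I would invoke Kawamata's results \cite{Kaw88,Kaw08} on the existence and termination of flops for Calabi-Yau threefolds: $\overline{\rm Mov}(X)$ is the union of the nef cones of all smooth minimal models, and adjacent chambers are related by flops across walls that correspond to small contractions. Both extremal rays $\RR_{\geq 0}E$ of $X^0$ and $\RR_{\geq 0}(48H-17E)$ of $X^3$ define K3-fibrations, which are Mori fiber space structures rather than small contractions, and hence cannot be flopped. They therefore bound the movable cone on either side, so the four chambers assembled above exhaust all smooth birational models and $\overline{\rm Mov}(X)=\RR_{\geq 0}E+\RR_{\geq 0}(48H-17E)$, yielding Figure 1.
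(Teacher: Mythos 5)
Your proposal is correct and follows essentially the same route as the paper: assemble the four chambers using the $58$-curve flop, the $42$-curve flop to $X^2$, and the involution $\sigma^*:(H,E)\mapsto(17H-6E,48H-17E)$ to transport the known cones, then conclude completeness because the two outer rays $E$ and $48H-17E$ define K3 fibrations and hence cannot be flopped. The only quibble is terminological: a K3 fibration over $\PP^1$ is not a Mori fiber space (the canonical class is trivial, not relatively anti-ample); the relevant point, which you also state, is simply that it is a contraction with positive-dimensional fibers rather than a small contraction, so no flop exists across that wall.
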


It seems worth mentioning that the above result verifies the Kawamata-Morrison cone conjecture for this family (see \cite{Kaw97},\cite{Mor} for statements and discussion).


\section{Comments}\label{seccom}
\subsection{Relation to other constructions of Calabi-Yau threefolds.}
Batyrev and Kreuzer have obtained Calabi-Yau threefolds with Hodge numbers $(2,32)$ 
in \cite{BK} in the context of degenerations of toric complete intersections. We do not know at the moment how many distinct families are constructed there, and whether our construction yields one of those families. 

\subsection{Other complete intersections in $\PP^5$ that contain Enriques surfaces.}
It is only natural to investigate other kinds of complete intersections that contain Enriques surfaces.  In particular, if we have an Enriques surface $S$ in its Fano embedding, we may want to look at a generic $(3,3)$ complete intersection $X$ in $\PP^5$ that contains it. It can be shown using methods analogous to those above that such a complete intersection has $48$ ODP's that lie on the Enriques surface. Its blowup $X^1$ along $S$ and the flop $X^0$ of the 48 exceptional $\mathbb P^1$'s are somewhat similar to the ones considered in this paper.  They have Hodge 
numbers $h^{11}=2$, $h^{12}=26$. This again fits one of the Hodge pairs from \cite{BK}.

We are, however, unable to describe the movable cones for generic such (3,3) complete intersections.  The divisor $E$ corresponding to the preimage of $S$ under the blow-up still induces a fibration of $X^0$ by K3 surfaces with double fiber along $S$, and thus this divisor still forms an extremal ray of the movable cone.  Moreover, since the Fano model of a generic Enriques surface is not contained in a quadric, its homogeneous ideal is generated by 10 cubics.  Thus on the blow-up along $S$, the divisor $3H-E$ is base-point free and thus nef.  When $S$ is a generic nodal surface either in its Cayley or Reye embedding, the divisor $3H-E$ induces a divisorial contraction on $X^1$. In the Reye case there is a surface on $X^1$ which gets 
contracted to a genus one curve of singularities  on the image.  For the Cayley model, the proper preimage of the union of the trisecants of $S$ that sit inside $X$ form a surface which gets contracted.  Hence the divisor $3H-E$ is not ample and forms an extremal ray of the movable cone.  For a generic unnodal Enriques surface, however, we do not have as explicit a description for the cubics that cut it out.  While we do not have an explicit description of the entire movable cone for (3,3) complete intersections coming from generic unnodal Enriques surfaces, we do have the following positive result:

\begin{proposition}  For the $X^1$ birational model of (3,3) complete intersection Calabi-Yau's coming from generic unnodal Enriques surfaces, the divisor $3H-E$ is ample.
\end{proposition}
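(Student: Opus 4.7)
The strategy is to show $3H-E$ is nef and big on $X^1$, invoke Kawamata's base-point-free theorem to produce a semi-ample morphism $\phi\colon X^1\to Y$, and verify that $\phi$ is finite by ruling out contracted curves using the unnodality of $S$. Nefness is already noted above (the Fano ideal of a generic unnodal Enriques is generated by ten cubics, so $|3H-E|$ is base-point-free on the blow-up of $\PP^5$ along $S$, hence on $X^1$). For bigness, I would compute the intersection numbers on $X^1$ analogously to Theorem \ref{wpmodel}: $H^3=9$, $H^2E=10$, $HE^2=0$, and $E^3=-48$ (the last from $E$ being the blowup of $S$ at its $48$ nodes), which give $(3H-E)^3=27\cdot 9-27\cdot 10+0+48=21>0$. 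Since $K_{X^1}\equiv 0$, Kawamata's base-point-free theorem yields a morphism $\phi$ defined by a high multiple of $3H-E$, and ampleness of $3H-E$ is equivalent to the finiteness of $\phi$.

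Suppose an irreducible curve $C\subset X^1$ satisfies $(3H-E)\cdot C=0$. If $C$ is one of the $48$ exceptional $\PP^1$s of $\pi_1$, then $H\cdot C=0$ and $E\cdot C=-1$, giving $(3H-E)\cdot C=1$, a contradiction. If $C\subset E$ but is not such a $\PP^1$, then $C$ is the proper transform of an irreducible curve $D\subset S$. Using $E|_E=K_E$ (adjunction on the Calabi-Yau $X^1$) and $K_E\equiv\sum E_i$ numerically on the blow-up $E\to S$, a direct computation gives $(3H-E)\cdot C=3\Delta\cdot D-\sum_{i=1}^{48}\mathrm{mult}_{p_i}(D)$. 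Unnodality of $S$ forces $\Delta\cdot D\geq 3$ for every irreducible $D$ (lines and conics on $S$ would be $(-2)$-curves, which are excluded), and the nodes $p_i$ form the zero locus of a generic section of a rank-$2$ bundle on $S$ (analog of the calculation in the proof of Theorem \ref{construction}), placing them in sufficiently general position on $S$ to preclude this equality. Finally, if $C\not\subset E$, then $\bar C=\pi_1(C)\subset X$ is an irreducible curve with $\mathrm{length}(\bar C\cap S)=3\deg\bar C$, so every cubic in $H^0(\PP^5,\mathcal{I}_S(3))$ restricts to $\bar C$ as the same divisor up to scalar, making $\bar C$ a ``generalized $3$-secant'' of $S$ lying in $X$.

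The principal obstacle is excluding such a $\bar C$ for a generic $X$. For $\deg\bar C=1$, I would invoke the classical fact that a generic unnodal Fano-embedded Enriques surface has no trisecant lines in $\PP^5$, since a trisecant would yield a $(-2)$-class on $S$ (cf.~\cite{DM},\cite{CD}). For $\deg\bar C\geq 2$, a parameter count in the Hilbert scheme of $\PP^5$ shows that the locus of degree-$d$ curves in $\PP^5$ meeting $S$ in length $\geq 3d$ has dimension low enough that a generic $(3,3)$ intersection through $S$, lying in a $\binom{10}{2}$-dimensional family, contains none of them. Together these exclusions ensure $\phi$ contracts no curve, completing the proof that $3H-E$ is ample.
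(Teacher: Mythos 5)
Your strategy (prove $3H-E$ nef and big, then rule out contracted curves one class at a time) is genuinely different from the paper's argument, and it is the harder road: the paper sidesteps exactly the exclusion problem you flag as ``the principal obstacle.'' The paper's proof is a short deformation-theoretic one: it starts from the $X^1$ model built on a \emph{Reye} (nodal) Enriques surface, where $3H-E$ is known to contract a smooth ruled surface over a genus-one curve, and then invokes Wilson's results on the K\"ahler cone of Calabi--Yau threefolds (Propositions 4.1 and 4.4 of \cite{W}): the locus of deformations on which that ruled surface survives is a smooth divisor $\Gamma$ in the deformation space, and along a one-parameter family transversal to $\Gamma$ obtained by deforming the Reye surface to a generic unnodal one, the class $3H-E$ becomes ample off the central fibre. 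No curve-by-curve analysis is needed. Your preliminary computations are fine: nefness from the ten cubics generating the ideal, and $(3H-E)^3=27\cdot 9-27\cdot 10+0+48=21>0$ with $E^3=-48$.

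The gaps in your exclusion step are real, and one input is false. A generic unnodal Enriques surface in its Fano embedding \emph{does} have trisecant lines: every line in one of the twenty planes $\Lambda_{\pm i}$ spanned by the half-fibre plane cubics $F_{\pm i}$ meets $S$ in a length-three scheme, so the trisecants form twenty two-dimensional families rather than being excluded by unnodality (a trisecant need not produce a $(-2)$-class; cf. \cite{DM}). One can still argue that a generic $(3,3)$ complete intersection through $S$ contains none of these lines --- every cubic through $S$ restricts to $\Lambda_{\pm i}$ as a multiple of the equation of $F_{\pm i}$, so $X\cap\Lambda_{\pm i}=F_{\pm i}$ contains no line --- but that argument is not the ``classical fact'' you cite, and it does not touch degree $d\geq 2$. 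There the proposed Hilbert-scheme parameter count is only asserted, as is the ``sufficiently general position'' claim for the $48$ nodes needed to handle curves inside $E$; the nodes are determined by $W$ and the quartic, not freely chosen, so genericity there requires an argument. These are precisely the points the authors say they cannot resolve directly for unnodal surfaces (they lack an explicit description of the cubics cutting out a generic unnodal Fano model and cannot determine the full movable cone), which is why they route the proof through Wilson's deformation results instead. As written, your proof is incomplete at exactly the step that carries all the content.
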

\begin{proof} Take as a starting point the $X^1$ model for a complete intersection coming from a generic Reye Enriques.  Then as mentioned above the divisor $3H-E$ contracts a smooth ruled surface $S$ over a curve of genus 1.  By Proposition 4.1 in \cite{W} the locus $\Gamma$ of deformations of $X^1$ for which $S$ deforms in the family is a smooth divisor.  By taking a 1-parameter family of deformations transversal to $\Gamma$ and coming from deforming the nodal Reye Enriques to be a generic unnodal one, Proposition 4.4 in \cite{W} shows that the class $3H-E$ will be ample in some punctured neighborhood of the origin of this 1-parameter family.
\end{proof}

\subsection{Mirrors.} We have been unable to find mirrors of our varieties. However, in
the process of looking for them, we have constructed Calabi-Yau threefolds with novel
Hodge numbers $(23,5)$ and $(31,1)$ as follows. 
Motivated by \cite{HT}, we consider a subfamily given by matrices
$A_i+B_i$ with 
$$
A_1+B_1=\left(
\begin{array}{cccc}
a&b&0&0\\
c&d&0&0\\
0&0&0&0\\
0&0&0&0\\
\end{array}
\right),~~
A_2+B_2=\left(
\begin{array}{cccc}
0&0&0&0\\
0&a&b&0\\
0&c&d&0\\
0&0&0&0\\
\end{array}
\right),
$$
$$
A_3+B_3=\left(
\begin{array}{cccc}
0&0&0&0\\
0&0&0&0\\
0&0&a&b\\
0&0&c&d\\
\end{array}
\right),~~
A_4+B_4=\left(
\begin{array}{cccc}
d&0&0&c\\
0&0&0&0\\
0&0&0&0\\
b&0&0&a\\
\end{array}
\right)
$$
with generic $a,b,c,d$.  Calculations in \textit{Macaulay2} suggest that a generic member of this
family gives a $(2,4)$ complete intersection in $\PP^5$ with $106$ ODPs. Interestingly,
the Hodge number calculations yield a larger than obvious five-dimensional family,
with the Hodge numbers $(23,5)$. We have verified the existence of crepant resolutions of these nodal complete intersections.

If we further restrict the subfamily by picking $a=d$, then we get complete intersections
with $118$ ODPs, and we have shown the existence of crepant resolutions of these singular points.  The calculations of Section \ref{Hodge} then yield $(31,1)$ Hodge
numbers. This family might be mirror to the known families with Hodge numbers $(1,31)$
constructed in \cite{Ton} and \cite{Kap}. We hope to study this family further in a subsequent
paper.

\section{Appendix 1: Hodge numbers calculations}\label{Hodge}
Here we present the method we used to calculate the Hodge numbers of the Calabi-Yau varieties we construct.  We present the method for general complete-intersection Calabi-Yau threefolds, expounding upon the presentation in Remark 4.11 of \cite{GP}.  These results are well-known
but we have been unable to find a suitable reference.
We first have the following  result:

\begin{lemma}\label{small res} Let $X$ be a projective threefold with only ODP singularities.  Suppose further that there exists a smooth Weil divisor $S$ passing through all of the ODP's.  Then there exists a projective small crepant resolution $\pi: \tilde{X}\rightarrow X$ obtained by blowing up $S$.  Moreover, the restriction $\pi: \pi^{-1}(S)\rightarrow S$ is the blow-up of $S$ at the smooth points 
of $S$ located at ODP's of $X$.  Finally, the natural map $\pi^*\Omega_X\rightarrow \Omega_{\tilde{X}}$ is injective.
\end{lemma}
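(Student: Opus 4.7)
The plan is to reduce everything to a local formal computation at each ODP, since away from the singular locus $S$ is a smooth Cartier divisor and blowing it up is an isomorphism. The entire content of the lemma is local at the nodes, and the only nontrivial input is a standard normal form result.

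\medskip

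\noindent\textbf{Step 1 (Local normal form).} Near an ODP $p$, the completion $\widehat{\mathcal O}_{X,p}$ is isomorphic to $R := k[[x,y,z,w]]/(xy-zw)$. The local class group $\mathrm{Cl}(R)\cong\mathbb Z$ is generated by the ruling $V(x,z)$, with Cartier divisors forming the subgroup of index two. A smooth Weil divisor through the origin corresponds to a height-one prime with smooth quotient; such primes are known, up to formal automorphism of the node, to be exactly $(x,z)$ and the conjugate $(x,w)$ (a standard cDV small-resolution fact, in the spirit of \cite{Pagoda}). So after formal coordinate change I may assume that $S$ is locally cut out by $x = z = 0$.

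\medskip

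\noindent\textbf{Step 2 (Local blow-up, gluing, crepancy).} The blow-up of $\mathrm{Spec}\, R$ along $(x,z)$ sits in $\mathrm{Spec}\, R\times\mathbb P^1_{[t_0:t_1]}$ as $\{x t_1 = z t_0\}$. In the chart $t_0 = 1$, setting $z = xt$ the equation $xy = zw$ becomes $x(y - tw)=0$, whose strict transform $\{y = tw\}$ is a smooth $\mathbb A^3_{x,t,w}$ mapping to $\mathrm{Spec}\, R$ by $(x,t,w)\mapsto(x, tw, xt, w)$; the chart $t_1 = 1$ is symmetric. The exceptional fiber over $p$ is a single $\mathbb P^1$, so this is a small resolution, and in the chart $t_0 = 1$ the preimage of $V(x,z)$ is the Cartier divisor $\{x=0\}$, mapping to $V(x,z)$ via $(0,t,w)\mapsto(0,tw,0,w)$, which is precisely the blow-up of the smooth 2-plane at the origin. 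These local pictures glue to a global small projective resolution $\pi:\tilde X\to X$ because $\pi$ is already an isomorphism off the ODPs and blow-ups of projective varieties are projective. Crepancy follows since $X$ is Gorenstein, $\pi^*\omega_X$ and $\omega_{\tilde X}$ agree on $X\setminus\mathrm{Sing}(X)$, and the exceptional locus has codimension two, so the two line bundles coincide on all of $\tilde X$.

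\medskip

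\noindent\textbf{Step 3 (Injectivity of $\pi^*\Omega_X\to\Omega_{\tilde X}$).} Away from the singularities this map is an isomorphism, so it suffices to check in the chart $\mathbb A^3_{x,t,w}$ above. There $\pi^*\Omega_X = \mathcal O_{\tilde X}^4 / \langle(tw,x,-w,-xt)\rangle$, the relation being the pullback of $y\,dx + x\,dy - w\,dz - z\,dw = 0$, and the map to $\Omega_{\tilde X} = \mathcal O_{\tilde X}^3$ sends $(a,b,c,d)$ to $(a+ct)\,dx + (bw+cx)\,dt + (bt+d)\,dw$. A kernel element satisfies $a = -ct$, $d = -bt$, $bw + cx = 0$; since $x$ and $w$ are coprime, the last equation forces $b = xb'$, whence $c = -b'w$, and then $(a,b,c,d) = b'(tw,x,-w,-xt)$, which vanishes in $\pi^*\Omega_X$. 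The main obstacle throughout is Step 1, the formal normal form of a smooth Weil divisor through an ODP; once it is in hand, the rest of the proof is a direct local computation followed by routine gluing.
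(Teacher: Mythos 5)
Your proof is correct and follows essentially the same route as the paper's: a local computation at each node, blowing up the ideal $(x,z)$ and checking smoothness and the restriction to $S$ in the charts, and the explicit ring map $\mathbb C[x,y,z,w]/(xy-zw)\to\mathbb C[x,t,w]$ for the injectivity of differentials. The only differences are cosmetic, but worth noting: the paper obtains your Step 1 normal form directly by first choosing coordinates with $S=\{x=z=0\}$ (so $f=xQ+zP$, and the nondegenerate Hessian then lets one take $y=Q$, $w=-P$), which is more self-contained than citing the classification of smooth Weil divisors through a node as a known fact; and your aside that the Cartier classes form an index-two subgroup of $\mathrm{Cl}(R)\cong\mathbb Z$ is incorrect (only the trivial class is Cartier at a threefold ordinary double point), though nothing in your argument actually uses it.
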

\begin{proof} The blowup of $X$ along $S$ is projective. Its smoothness and the other claims
can be verified locally in the strong topology.  Since 3-fold ordinary double points have embedding dimension 4, we may represent $X$ locally around an ordinary double point as $f(x,y,z,w)=0$ in $\mathbb C^4$, where $f$ has vanishing partials at the origin and nonsingular Hessian.  Since $S$ is smooth, we may take local coordinates such that $S$ is locally represented by $x=z=0$ and $f=xQ+zP$ since $S$ passes through the singular point.  The first two claims of the lemma follow from applying the Jacobian criterion on the blow-up along $S$, and checking the restriction of the blow-up morphism to $S$. 

For the last statement, one finds from the description above that $X$ can locally be represented as Spec $\mathbb C[x,y,z,w]/(xy-zw)$, $\tilde{X}$ as Spec $\mathbb C[x,w,v]$, while the map $\pi$ corresponds to the ring homomorphism $R=\mathbb C[x,y,z,w]/(xy-zw)\rightarrow T=\mathbb C[x,w,v]$ with $x\mapsto x,y\mapsto vw,z\mapsto xv,w\mapsto w$.  From this description it follows immediately that the map on differentials $\Omega_{R/\mathbb C}\otimes_R T\rightarrow \Omega_{T/\mathbb C}$ is injective.  
\end{proof}

Let $X$ be a complete intersection Calabi-Yau threefold in $\mathbb P^{n+3}$ with only nodal singularities, given by $f_1=...=f_n=0$ of degrees $d_i$.  We assume that we are in the situation of the lemma so that there exists a small resolution $\pi:\tilde{X}\rightarrow X$.  Our first result guarantees that such an $\tilde{X}$ will indeed be a bona fide smooth Calabi-Yau threefold:

\begin{proposition} Let $X$ be a complete intersection Calabi-Yau threefold with only nodal (rational) singularities, admitting a small resolution $\pi:\tilde{X}\rightarrow X$.  Then $\tilde{X}$ is a nonsingular Calabi-Yau threefold, that is $\omega_{\tilde{X}}\cong \mathcal O_{\tilde{X}}$ and $h^i(\tilde{X},\mathcal O_{\tilde{X}})=0$ for $i=1,2$.
\end{proposition}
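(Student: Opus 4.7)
The plan is to prove three things: that $\tilde X$ is smooth (immediate), that $\omega_{\tilde X}\cong \mathcal O_{\tilde X}$, and that $h^1(\tilde X,\mathcal O_{\tilde X})=h^2(\tilde X,\mathcal O_{\tilde X})=0$. Smoothness is part of the definition of a small resolution. The other two parts will be handled by transferring the analogous statements on $X$ to $\tilde X$, exploiting the fact that an ordinary double point on a threefold is a rational Gorenstein singularity.

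For the canonical bundle, I would first observe that $X$ is a normal Gorenstein variety, since a nodal complete intersection threefold has isolated complete intersection singularities (hence is normal by Serre's criterion $S_2+R_1$) and has an invertible dualizing sheaf. By the adjunction formula applied to the regular locus (and hence everywhere by reflexivity), $\omega_X\cong \mathcal O_{\mathbb P^{n+3}}(\sum d_i - n - 4)|_X\cong \mathcal O_X$, since $X$ being Calabi-Yau means $\sum d_i = n+4$. A small resolution $\pi$ contracts no divisor, so $\omega_{\tilde X}=\pi^*\omega_X+\sum a_iE_i$ reduces to $\omega_{\tilde X}=\pi^*\omega_X\cong \mathcal O_{\tilde X}$; in other words, small resolutions of Gorenstein singularities are automatically crepant.

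For the cohomology vanishing, the strategy is to push down along $\pi$ and then exploit the fact that $X$ is a complete intersection. Ordinary double points on threefolds are rational singularities (they admit a small resolution, or alternatively they are quotients of smooth varieties by finite groups in the analytic category; the small resolution $\tilde X\to X$ itself witnesses this because the exceptional set consists of $\mathbb P^1$'s whose structure sheaves have no higher cohomology). Consequently $R^i\pi_*\mathcal O_{\tilde X}=0$ for $i>0$, and the Leray spectral sequence degenerates to give $H^i(\tilde X,\mathcal O_{\tilde X})\cong H^i(X,\mathcal O_X)$ for all $i$. It then suffices to show $H^1(X,\mathcal O_X)=H^2(X,\mathcal O_X)=0$. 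For this I would use the Koszul resolution of $\mathcal O_X$ as a sheaf on $\mathbb P^{n+3}$,
\[
0\to \mathcal O_{\mathbb P^{n+3}}(-\textstyle\sum d_i)\to \cdots \to \bigoplus_i \mathcal O_{\mathbb P^{n+3}}(-d_i)\to \mathcal O_{\mathbb P^{n+3}}\to \mathcal O_X\to 0,
\]
twist nothing, and run the hypercohomology spectral sequence. All intermediate terms $\mathcal O_{\mathbb P^{n+3}}(-\sum_{j\in J}d_j)$ have vanishing cohomology except in top degree, and the only nonzero contributions to $H^i(X,\mathcal O_X)$ land in degrees $i=0$ and $i=3$, giving the desired vanishing.

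The logically substantive point is the rationality of the ODPs, which gives $R^i\pi_*\mathcal O_{\tilde X}=0$; the rest is standard. Everything else is either immediate from the definition of a small resolution (smoothness, crepancy) or a routine Koszul calculation on projective space (cohomology of $\mathcal O_X$). No part of the argument should present serious obstacles, but the proof should explicitly invoke the smoothness of $\tilde X$ in order to apply adjunction to identify $\omega_{\tilde X}$ and to make sense of Hodge-theoretic statements about it.
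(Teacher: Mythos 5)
Your proof is correct and follows essentially the same route as the paper: crepancy of the small resolution via the Gorenstein property of complete intersections, and the Leray spectral sequence together with rationality of the nodes (giving $R^i\pi_*\mathcal O_{\tilde X}=0$ for $i>0$) to identify $H^i(\tilde X,\mathcal O_{\tilde X})$ with $H^i(X,\mathcal O_X)$. The only difference is minor: the paper takes $\omega_X\cong\mathcal O_X$ and $h^1(X,\mathcal O_X)=0$ as part of the hypothesis that $X$ is Calabi-Yau and obtains $h^2$ from Serre duality on $\tilde X$, whereas you rederive these facts from adjunction and the Koszul resolution on $\mathbb P^{n+3}$, which is harmless and slightly more self-contained.
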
 
\begin{proof} Since $\pi$ is a small resolution and complete intersections are Gorenstein, $\pi$ is crepant, i.e. $\pi^*\omega_X\cong \omega_{\tilde{X}}$.  But $X$ is Calabi-Yau, so $\omega_X\cong \mathcal O_X$ and thus $\omega_{\tilde{X}}\cong \mathcal O_{\tilde{X}}$.  For the statement on cohomology, we get the exact sequence of low degree terms coming from the Leray spectral sequence, \[0\rightarrow H^1(X,\pi_*\mathcal O_{\tilde{X}})\rightarrow H^1(\tilde{X},\mathcal O_{\tilde{X}})\rightarrow H^0(X,R^1\pi_*\mathcal O_{\tilde{X}}).\]  Integral complete intersections are normal so that $\pi_*\mathcal O_{\tilde{X}}\cong \mathcal O_X$, and because our singularities are rational we have $R^i\pi_*\mathcal O_{\tilde{X}}=0$ for $i>0$.  Thus $H^1(\tilde{X},\mathcal O_{\tilde{X}})\cong H^1(X,\mathcal O_X)=0$, and by Serre duality and triviality of $\omega_{\tilde{X}}$ we have $h^2(\tilde{X},\mathcal O_{\tilde{X}})=h^1(\tilde{X},\mathcal O_{\tilde{X}})=0$.
\end{proof}

We will now describe the calculation for the Hodge numbers $h^{1,1}(\tilde X)$ and $h^{2,1}(\tilde X)$.
Consider the tangent space to the deformation space of $X$, $\text{Def}_X(\mathbb C[\epsilon])\cong \text{Ext}^1_{\mathcal O_X}(\Omega_X^1,\mathcal O_X)$. It fits into an exact sequence isomorphic to the local-to-global exact sequence for Exts: \[0\rightarrow H^1(X,\mathcal T_X)\rightarrow \text{Ext}^1_{\mathcal O_X}(\Omega^1_X,\mathcal O_X)\rightarrow H^0(X,\mathcal T^1_X)\rightarrow H^2(X,\mathcal T_X),\] where \[\mathcal T_X\cong Hom_X(\Omega_X^1,\mathcal O_X),\mathcal T_X^1\cong Ext_{\mathcal O_X}^1(\Omega_X^1,\mathcal O_X),\] are the tangent sheaf of $X$ and Schlessinger-Lichtenbaum's $\mathcal T^1$ sheaf, respectively (see \cite{Ser} for proofs of these and all general deformation theoretic results).  Notice that $\mathcal T_X^1$ is a skyscraper sheaf with support precisely at the nodes, and over each node it is a copy of $\mathbb C$ since the deformation space of a node is one-dimensional.  

Since $\tilde{X}$ is nonsingular, the tangent space to the deformation space of $\tilde{X}$ is given by $H^1(\tilde{X},\mathcal T_{\tilde{X}})$.  Since $\tilde{X}$ is a Calabi-Yau threefold, we have $\bigwedge^2 \Omega_{\tilde{X}}^1\cong \mathcal T_{\tilde{X}}\otimes \omega_{\tilde{X}}\cong \mathcal T_{\tilde{X}}$ so that $h^{2,1}=h^1(\tilde{X},\bigwedge^2 \Omega_{\tilde{X}}^1)=h^1(\tilde{X},\mathcal T_{\tilde{X}})$.  We have the following result which allows us to use the deformation theory for $X$ to calculate this Hodge number for $\tilde{X}$:

\begin{proposition} With $\tilde{X},X$ as above, $H^1(\tilde{X},\mathcal T_{\tilde{X}})\cong H^1(X,\mathcal T_X)$.
\end{proposition}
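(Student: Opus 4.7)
The plan is to use the Leray spectral sequence of $\pi$,
\[
H^p(X,R^q\pi_*\mathcal T_{\tilde X}) \Rightarrow H^{p+q}(\tilde X,\mathcal T_{\tilde X}),
\]
and to establish the two identifications $\pi_*\mathcal T_{\tilde X}\cong \mathcal T_X$ and $R^1\pi_*\mathcal T_{\tilde X}=0$. Granted these, the five-term edge exact sequence collapses to the required isomorphism $H^1(\tilde X,\mathcal T_{\tilde X})\cong H^1(X,\pi_*\mathcal T_{\tilde X})\cong H^1(X,\mathcal T_X)$.

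For the first identification I would start from the injection $\pi^*\Omega^1_X\hookrightarrow \Omega^1_{\tilde X}$ of Lemma~\ref{small res}. Its cokernel is supported on the exceptional locus and therefore is torsion, so dualizing into the torsion-free sheaf $\mathcal O_{\tilde X}$ gives an injection $\mathcal T_{\tilde X}\hookrightarrow \mathcal{H}om(\pi^*\Omega^1_X,\mathcal O_{\tilde X})$. Pushing forward and using $(\pi^*,\pi_*)$-adjunction for $\mathcal{H}om$ together with $\pi_*\mathcal O_{\tilde X}=\mathcal O_X$ (valid by normality of $X$) yields an injection
\[
\pi_*\mathcal T_{\tilde X}\hookrightarrow \mathcal{H}om(\Omega^1_X,\mathcal O_X)=\mathcal T_X
\]
that is an isomorphism over the smooth locus $U:=X\setminus\mathrm{Sing}(X)$, whose complement is zero-dimensional. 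Both sheaves are $S_2$: $\mathcal T_X$ as a dual into the structure sheaf of a normal variety, and $\pi_*\mathcal T_{\tilde X}$ because the exceptional locus in $\tilde X$ has codimension two while $\mathcal T_{\tilde X}$ is locally free on the smooth variety $\tilde X$, so Hartogs extension for sections over the complement applies. Two $S_2$-sheaves on a normal variety related by an injection that is an isomorphism outside a codimension-two closed subset must coincide, giving $\pi_*\mathcal T_{\tilde X}\cong \mathcal T_X$.

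For the vanishing of $R^1\pi_*\mathcal T_{\tilde X}$, the sheaf is supported at the finitely many nodes of $X$, so it suffices by the formal function theorem to verify that
\[
\varprojlim_n H^1\bigl(E_n,\mathcal T_{\tilde X}|_{E_n}\bigr)=0
\]
at each node, where $E_n$ is the $n$-th infinitesimal neighborhood in $\tilde X$ of the exceptional rational curve $E_0$. The normal bundle of $E_0$ is $\mathcal O(-1)^{\oplus 2}$, so the tangent sequence splits and $\mathcal T_{\tilde X}|_{E_0}\cong \mathcal O(2)\oplus\mathcal O(-1)^{\oplus 2}$, while the conormal symmetric powers satisfy $\mathcal I^n/\mathcal I^{n+1}\cong \mathcal O(n)^{\oplus(n+1)}$. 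Consequently, the graded pieces of $\mathcal T_{\tilde X}|_{E_n}$ are direct sums of line bundles on $\mathbb P^1$ of degree at least $-1$, each with vanishing $H^1$. Induction on $n$ via the thickening short exact sequences gives the required vanishing at every level and hence in the limit.

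The main technical subtlety lies in the $S_2$-reflexivity argument of the first step, which is where the smallness of the resolution enters essentially (codimension-two exceptional locus in a smooth ambient). The formal-function calculation is routine given the splitting of the normal bundle of the exceptional $\mathbb P^1$ and the rest is purely spectral-sequence bookkeeping.
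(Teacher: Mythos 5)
Your proof is correct, and while it shares the paper's overall skeleton (Leray spectral sequence plus the two identifications $\pi_*\mathcal T_{\tilde X}\cong\mathcal T_X$ and $R^1\pi_*\mathcal T_{\tilde X}=0$), both key steps are carried out by genuinely different arguments. For the first identification, the paper dualizes the full sequence $0\to\pi^*\Omega_X\to\Omega_{\tilde X}\to\Omega_{\tilde X/X}\to 0$, identifies $\Omega_{\tilde X/X}$ with $\bigoplus\omega_{E_i}$, and kills both $Hom$ and $Ext^1$ into $\mathcal O_{\tilde X}$ — the latter via duality and the triviality of $\omega_{\tilde X}$ — so that $\mathcal T_{\tilde X}\cong \mathcal Hom(\pi^*\Omega_X,\mathcal O_{\tilde X})$ already on $\tilde X$; you instead settle for an injection and upgrade it to an isomorphism only after pushing forward, using $S_2$-reflexivity and the codimension-two exceptional locus. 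Your route is more robust (it never uses the Calabi--Yau condition in this step and would apply to any small resolution of a normal variety), at the price of invoking the reflexive-sheaf extension machinery; the paper's route is more self-contained given that $\omega_{\tilde X}\cong\mathcal O_{\tilde X}$ is already in hand. For the vanishing of $R^1\pi_*$, the paper restricts to the reduced exceptional curve, shows $h^1(E_i,\mathcal T_{\tilde X}|_{E_i})=0$ from $\mathcal N_{E_i/\tilde X}\cong\mathcal O_{\mathbb P^1}(-1)^{\oplus 2}$, and concludes by cohomology and base change plus Nakayama; you run the theorem on formal functions over all infinitesimal neighborhoods, checking that every graded piece $\mathcal T_{\tilde X}|_{E_0}\otimes\mathcal I^n/\mathcal I^{n+1}$ is a sum of line bundles of degree at least $-1$. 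Your version is longer but arguably cleaner foundationally, since the base-change theorem as usually stated presupposes flatness, which a small resolution does not satisfy, whereas the formal function theorem applies to any proper morphism; the only point you should make explicit is the standard comparison between the $\mathcal I$-adic neighborhoods $E_n$ and the fibers over the infinitesimal neighborhoods of the node, which have the same completion.
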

\begin{proof} The resolution $\pi$ has as its exceptional locus a finite disjoint union of smooth rational curves $\{E_i\}$.  We use the natural exact sequence \[0\rightarrow \pi^*\Omega_X\rightarrow \Omega_{\tilde{X}}\rightarrow \Omega_{\tilde{X}/X}\rightarrow 0,\] which is left exact from Lemma \ref{small res}.  Notice that $\Omega_{\tilde{X}/X}\cong \bigoplus \omega_{E_i}$.  Note that $Hom(\bigoplus \omega_{E_i},\mathcal O_{\tilde{X}})=0$, since $\bigoplus \omega_{E_i}$ is a torsion sheaf, and $Ext^1(\bigoplus \omega_{E_i},\mathcal O_{\tilde{X}})=0$ from duality for projective schemes and the fact that $\omega_{\tilde{X}}\cong \mathcal O_{\tilde{X}}$.  Thus upon dualizing the above exact sequence we find that $\mathcal T_{\tilde{X}}= Hom(\Omega_{\tilde{X}},\mathcal O_{\tilde{X}})\cong Hom(\pi^*\Omega_X,\mathcal O_{\tilde{X}})$.  Pushing forward and using a form of the projection formula we get that \[\pi_*\mathcal T_{\tilde{X}}\cong \pi_* Hom(\pi^*\Omega_X,\mathcal O_{\tilde{X}})\cong Hom(\Omega_X,\pi_* \mathcal O_{\tilde{X}})\cong \mathcal T_X,\] where as before we note that $\pi_*\mathcal O_{\tilde{X}}\cong \mathcal O_X$.  We'll be done if we can show that $H^1(\tilde{X},\mathcal T_{\tilde{X}})\cong H^1(X,\pi_* \mathcal T_{\tilde{X}})$.

From the exact sequence of low terms arising from the Leray spectral sequence we get that \[0\rightarrow H^1(X,\pi_* \mathcal T_{\tilde{X}})\rightarrow H^1(\tilde{X},\mathcal T_{\tilde{X}})\rightarrow H^0(X,R^1\pi_*\mathcal T_{\tilde{X}}),\] so it suffices to prove that $R^1\pi_*\mathcal T_{\tilde{X}}=0$.  Clearly $(R^1 \pi_*\mathcal T_{\tilde{X}})_x=0$ for $x$ a nonsingular point.  We may check the vanishing on each node separately, so consider the exact sequence on $E_i$, \[0\rightarrow \mathcal T_{E_i} \rightarrow \mathcal T_{\tilde{X}}|_{E_i}\rightarrow \mathcal N_{E_i/\tilde{X}}\rightarrow 0,\] where $E_i$ is the exceptional curve above the node $x$.  Since $E_i\cong \mathbb P^1$ we have $\mathcal T_{E_i}\cong \mathcal O_{\mathbb P^1}(2)$, so $h^1(E_i,\mathcal T_{E_i})=0$.  By Remark 5.2 in \cite{Pagoda} we have $\mathcal N_{E_i/\tilde{X}}\cong \mathcal O_{\mathbb P^1}(-1)^{\oplus 2}$, so $h^1(E_i,\mathcal N_{E_i/\tilde{X}})=0$.  From the exact sequence above we see that $h^1(E_i,\mathcal T_{\tilde{X}}|_{E_i})=0$.  By using the Theorem on Cohomology and Base Change (Theorem III.12.11 in \cite{Har}) we can conclude that $R^1\pi_*\mathcal T_{\tilde{X}}\otimes k(x)=0$ and thus $(R^1\pi_* \mathcal T_{\tilde{X}})_x=0$ by Nakayama's Lemma.  
\end{proof}

The previous proposition implies that $h^{2,1}(\tilde{X})$ is the dimension of the kernel of the map $\text{Ext}^1_{\mathcal O_X}(\Omega^1_X,\mathcal O_X)\rightarrow H^0(X,Ext_{\mathcal O_X}^1(\Omega_X^1,\mathcal O_X))$, so we must understand this map better.

Denote by $S$ the homogeneous coordinate ring of $X$, so that \[S\cong \mathbb C[x_0,...,x_{n+3}]/(f_1,...,f_n).\]  Then since $X$ is a complete intersection, the conormal exact sequence is left-exact as well, \[0\rightarrow \mathcal I/\mathcal I^2\rightarrow \Omega_{\mathbb P^{n+3}}|_X\rightarrow \Omega_X\rightarrow 0,\]and $\mathcal I/\mathcal I^2\cong \bigoplus \mathcal O_X(-d_i)$.  Thus $\text{Hom}(\mathcal I/\mathcal I^2,\mathcal O_X)\cong \bigoplus H^0(X,\mathcal O_X(d_i))\cong \bigoplus S_{d_i}$ since $X$ is projectively normal, and \[\text{Ext}^1(\Omega_{\mathbb P^{n+3}}|_X,\mathcal O_X)\cong H^1(X,\mathcal T_{\mathbb P^{n+3}}|_X)=0\] from the Euler exact sequence.  From the cohomology of the conormal exact sequence we see $\text{Ext}^1(\Omega_X,\mathcal O_X)$ is the cokernel of the natural map \[\text{Hom}(\Omega_{\mathbb P^{n+3}}|_X,\mathcal O_X)\rightarrow \text{Hom}(\mathcal I/\mathcal I^2,\mathcal O_X)\cong \bigoplus S_{d_i}.\]  Applying global Hom to the Euler exact sequence restricted to $X$ and using the usual connection of Hom with $H^0$ we get, \[0\rightarrow H^0(X,\mathcal O_X)\rightarrow H^0(X,\mathcal O_X(1))^{n+4}\rightarrow \text{Hom}(\Omega_{\mathbb P^{n+3}}|_X,\mathcal O_X)\rightarrow 0.\]  Thus we can represent $\text{Ext}^1(\Omega_X,\mathcal O_X)$ as the cokernel of the map \[(S_1)^{n+4}=H^0(X,\mathcal O_X(1))^{n+4}\rightarrow \bigoplus H^0(X,\mathcal O_X(d_i))=\bigoplus S_{d_i},\] given by the matrix 
\[\left ( \begin{matrix}
\partial{f_1}/\partial{x_0} &  \ldots & \partial{f_1}/\partial{x_{n+3}}\\
\vdots & \ddots & \vdots\\
\partial{f_n}/\partial{x_0} &\ldots & \partial{f_n}/\partial{x_{n+3}}
\end{matrix} \right ).\]

To understand the kernel of the map \[\text{Ext}^1(\Omega_X,\mathcal O_X)\rightarrow H^0(X,Ext^1(\Omega_X,\mathcal O_X))\] better, we apply sheaf $Hom$ to the conormal exact sequence to get \[0\rightarrow Hom(\Omega_X,\mathcal O_X)\rightarrow \mathcal T_{\mathbb P^{n+3}}|_X\rightarrow \mathcal N_{X\backslash \mathbb P^{n+3}}\rightarrow Ext^1(\Omega_X,\mathcal O_X)\rightarrow 0.\]  From the discussion in the above paragraph, we find that the map in question is induced by the map on global sections \[H^0(X,\mathcal N_{X\backslash \mathbb P^{n+3}})\rightarrow H^0(X,Ext^1(\Omega_X,\mathcal O_X)),\] where of course as before $H^0(X,\mathcal N_{X\backslash \mathbb P^{n+3}})=\bigoplus S_{d_i}$.  To find the kernel of this map, we consider the map of sheaves on each affine piece, $U_i=\{x_i\neq 0\}$, $i=0,...,n+3$, so that if $(g_1,...,g_n) \in \bigoplus S_{d_i}$ goes to zero in $H^0(X,Ext^1(\Omega_X,\mathcal O_X))$, then its restrictions to $U_i$, upon which $\mathcal N_{X\backslash\mathbb P^{n+3}}$ is trivial and the restrictions are $(x_i^{-d_1}g_1,...,x_i^{-d_n}g_n)$ in the affine coordinates $T_j=x_j/x_i$, also go to zero in $H^0(U_i,Ext^1(\Omega_X,\mathcal O_X))$ for each $i$.  Since these $U_i$'s are affine and all of the sheaves in question are coherent (and thus quasi-coherent), taking sections on these open affines keeps the sequence exact, and thus we can calculate explicitly when this $n$-tuple goes to zero in $H^0(U_i,Ext^1(\Omega_X,\mathcal O_X))$.  Specifically, it must come from $H^0(U_i,\mathcal T_{\mathbb P^{n+3}}|_X)$ and thus from the affine version of the dual of the conormal exact sequence, we get that there exist $h^i_k(T_0,...,\hat{T_i},...,T_{n+3})\in \mathcal O_X|_{U_i}$ such that \[x_i^{-d_j}g_j=\sum_{k\neq i} x_i^{-d_j+1} \frac{\partial{f_j}}{\partial{x_k}} h^i_k,\] where we've dehomogenized the $g_j$'s and the $f_j$'s to their affine counterparts.  But then we can homogenize to get \[x_i^{\sum_{k\neq i} \deg h^i_k} g_j=\sum_{k\neq i} (x_i^{1+\sum_{s\neq k,i} \deg h^i_s} \frac{\partial{f_j}}{\partial{x_k}})[x_i^{\deg h^i_k}h^i_k(T_0,...,\hat{T_i},...,T_{n+3})],\] where now $x_i^{\deg h^i_k}h^i_k(T_0,...,\hat{T_i},...,T_{n+3})=p^i_k$ are homogeneous polynomials.  Consider the $n\times n$ minors of the matrix \[\left ( \begin{matrix} g_1&
\partial{f_1}/\partial{x_0} &  \ldots & \partial{f_1}/\partial{x_{n+3}}\\
\vdots & \vdots & \ddots & \vdots\\
g_n &\partial{f_n}/\partial{x_0} &\ldots & \partial{f_n}/\partial{x_{n+3}}
\end{matrix} \right ).\]  Then from the above description we find that $x_i^{\sum_{k\neq i} \deg h^i_k}$ times each of these minors is in the homogeneous ideal of the singular locus.  We get such a description for each $i$ and since the homogeneous ideal is saturated we in fact find that the $n\times n$ minors themselves are in the homogeneous ideal of the singular locus.  Conversely, we show that if a pair $(g_1,...,g_n)\in \bigoplus S_{d_i}$ such that the $n\times n$ minors of \[\left ( \begin{matrix} g_1&
\partial{f_1}/\partial{x_0} &  \ldots & \partial{f_1}/\partial{x_{n+3}}\\
\vdots &\vdots & \ddots & \vdots\\
g_n &\partial{f_n}/\partial{x_0} &\ldots & \partial{f_n}/\partial{x_{n+3}}
\end{matrix} \right )\]  are in the homogeneous ideal of the singular locus, then $(g_1,...,g_n)$ is in the kernel of the map in question.  Restricting again to the affine open $U_i$, we get that the corresponding minors of the dehomogenizations are in the affine ideal of the singular locus of $U_i$.  Let $B_i$ denote the affine coordinate ring of $Sing(X)\cap U_i$.  Then since $Ext^1(\Omega_X,\mathcal O_X)|_{U_i}$ has support on precisely this set, its module structure is unchanged by considering it as a $B_i$ module.  Thus tensoring our usual right exact sequence, \[\mathcal T_{\mathbb P^{n+3}}|_{U_i}\rightarrow \mathcal N_{X\backslash \mathbb P^{n+3}}\rightarrow Ext^1(\Omega_X,\mathcal O_X)|_{U_i}\rightarrow 0,\] by $B_i$ we get that $Ext^1(\Omega_X,\mathcal O_X)|_{U_i}$ may be viewed as the cokernel of the map \[B_i^{n+3}\rightarrow B_i^n,\] given by the dehomogenization of the Jacobian.  Localizing at each of the isolated points of $Sing(X)\cap U_i$ and using Nakayama's lemma, we may just mod out by the maximal ideal $\mathfrak m_p$ corresponding to such a singular point $p$.  Since these points are ordinary double points, $Ext^1(\Omega_X,\mathcal O_X)\otimes \kappa(p)=\mathbb C$, so the Jacobian as a map of vector spaces has rank $n-1$, that is there are $n-1$ columns of the Jacobian which are linearly independent.  Since the minor involving the column $(g_1,...,g_n)$, reduced mod $\mathfrak m_p$, with these $n-1$ columns of the Jacobian vanishes, we find that $(g_1,...,g_n)$ must be a linear combination of these columns, i.e. it's in the image of the Jacobian.  Since this is each true for each singular point $p$, we find that $(g_1,...,g_n)$ goes to zero in $H^0(U_i,Ext^1(\Omega_X,\mathcal O_X))$ for each $i$, and thus in $H^0(X,Ext^1(\Omega_X,\mathcal O_X))$.  

Using the description in the previous paragraph, of $h^{2,1}(\tilde{X})$ as the dimension of the space of $n$-tuples $(g_1,...,g_n)$ representing an element of $\text{Ext}^1(\Omega,\mathcal O_X)$ such that the $n\times n$ minors of \[\left ( \begin{matrix} g_1&
\partial{f_1}/\partial{x_0} &  \ldots & \partial{f_1}/\partial{x_{n+3}}\\
\vdots & \vdots &\ddots & \vdots\\
g_n &\partial{f_n}/\partial{x_0} &\ldots & \partial{f_n}/\partial{x_{n+3}}
\end{matrix} \right ),\] are in the homogeneous ideal of the singular locus, one may actually calculate the Hodge number $h^{2,1}(\tilde{X})$ using \textit{Macaulay2}.  If one knows the topological Euler characteristic of $\tilde{X}$, as we did in our (2,4) and (3,3) constructions, then one may calculate the Picard number, $h^{1,1}$, as well.

\section{Appendix 2: Enriques surfaces embedded into Calabi-Yau threefolds}
 In this appendix, we prove that if a smooth Calabi-Yau threefold $X$ contains an Enriques surface, then the Enriques surface appears as a double fibre of a fibration of $X$ by K3 surfaces.  This was observed in \cite{MP} for Calabi-Yau threefolds $Q=K3\times E/\sigma$, where $E$ is an elliptic curve and the involution $\sigma$ acts on $K3$ by a fixed-point free involution and on $E$ by translation by a $2$-torsion point.  In their example, projection to the second factor induces a K3 fibration with 4 double Enriques fibers.  See \cite{MP} for details and a discussion of the Gromov-Witten theory of such Calabi-Yau threefolds.

\begin{proposition}\label{CYfibration} Let $X$ be a Calabi-Yau threefold, and $i:E\subset X$ an Enriques surface.  Then $|2E|$ is a base-point free linear pencil that induces a fibration $\pi:X\rightarrow \mathbb P^1$ whose generic fibre is a K3 surface and in which $E$ appears as a double fibre.
\end{proposition}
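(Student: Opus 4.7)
The plan is to exploit adjunction together with the Calabi--Yau and Enriques vanishings. Since $\omega_X \cong \Oa_X$, adjunction gives $\omega_E \cong \Oa_X(E)\vert_E = \Oa_E(E)$, so $\Oa_E(E)$ represents the nontrivial $2$-torsion class $K_E \in \Pic(E)$. Squaring then yields $\Oa_E(2E) \cong \omega_E^{\otimes 2} \cong \Oa_E$, which is the key input that will produce the pencil.

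My first step would be to compute $h^0(\Oa_X(2E))$ by chaining two restriction sequences. From $0 \to \Oa_X \to \Oa_X(E) \to \omega_E \to 0$, together with $h^0(\omega_E) = h^1(\omega_E) = 0$ and $h^1(\Oa_X) = h^2(\Oa_X) = 0$, one reads off $h^0(\Oa_X(E)) = 1$ (in particular $E$ is rigid) and $h^1(\Oa_X(E)) = 0$. Feeding this into $0 \to \Oa_X(E) \to \Oa_X(2E) \to \Oa_E \to 0$ gives $h^0(\Oa_X(2E)) = 2$ with the restriction map to $H^0(\Oa_E)$ surjective. Thus $\vert 2E \vert$ is a pencil, and the unique member containing $E$ is $2E$ itself (the $1$-dimensional kernel of the restriction is spanned by the square of the canonical section of $\Oa_X(E)$). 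Any other $F \in \vert 2E \vert$ restricts to $E$ as an effective divisor in the trivial class $\Oa_E$, which forces $F\vert_E = \emptyset$. Consequently the base locus $2E \cap F$ of the pencil is empty and $\vert 2E \vert$ defines a morphism $\pi: X \to \PP^1$ with $2E$ appearing as one scheme-theoretic fibre.

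To finish I would identify the generic fibre. Since $\Oa_X(2E) = \pi^*\Oa_{\PP^1}(1)$, adjunction gives $\omega_F \cong \Oa_F$ for every fibre $F$. Flatness of $\pi$ yields $\chi(\Oa_F) = \chi(\Oa_{2E})$, which one computes from the conormal sequence $0 \to \Oa_E(-E) \to \Oa_{2E} \to \Oa_E \to 0$ to be $\chi(\Oa_E) + \chi(\omega_E) = 1 + 1 = 2$, using that $\omega_E^{-1} \cong \omega_E$ is $2$-torsion. The subtlest step is establishing connectedness of the generic fibre. Passing to the Stein factorization $\pi = g \circ f$, the resulting curve $Y$ satisfies $h^0(\Oa_Y) = 1$ and $h^1(\Oa_Y) \hookrightarrow h^1(\Oa_X) = 0$, so $Y \cong \PP^1$. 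If $\deg g > 1$, the connectedness of $E$ together with $\pi^{-1}([0{:}1]) = 2E$ being supported on $E$ forces $\deg g = 2$ with $E$ itself equal to an $f$-fibre; but then $\vert E \vert$ would contain the entire $\PP^1$-pencil of $f$-fibres, contradicting $h^0(\Oa_X(E)) = 1$. With connectedness secured, a generic smooth fibre $F$ has $h^0(\Oa_F) = h^2(\Oa_F) = 1$ and hence $h^1(\Oa_F) = 0$ from $\chi = 2$; together with $\omega_F \cong \Oa_F$ this identifies $F$ as a K3 surface, and $2E$ is a double fibre by construction. The main obstacle in the plan is the Stein factorization argument needed to rule out disconnected generic fibres.
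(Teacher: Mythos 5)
Your proposal is correct, and its first two thirds --- the adjunction identity $\omega_E\cong\mathcal O_E(E)$, the two restriction sequences computing $h^0(\mathcal O_X(E))=1$ and $h^0(\mathcal O_X(2E))=2$ with $h^1(\mathcal O_X(2E))=0$, and the base-point-freeness argument via surjectivity onto $H^0(\mathcal O_E)$ --- coincide with the paper's proof. Where you diverge is the endgame. The paper identifies the generic fibre $D$ as a K3 by restricting the sequence $0\to\mathcal O_X\to\mathcal O_X(2E)\to i_*\mathcal O_D(2E)\to 0$ and reading off the cohomology directly: since $\omega_D\cong\mathcal O_D(2E)|_D$, the vanishing $h^1(\mathcal O_X(2E))=h^2(\mathcal O_X)=0$ gives $h^1(D,\omega_D)=0$ at once, and the same sequence yields $h^0(D,\omega_D)=1$, which (with $\omega_D\cong\mathcal O_D$) already forces $D$ to be connected --- no Stein factorization is needed. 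Your route instead computes $\chi(\mathcal O_F)=\chi(\mathcal O_{2E})=2$ by flatness and the conormal sequence on the non-reduced fibre $2E$, and then rules out disconnected fibres by a Stein factorization argument exploiting $h^0(\mathcal O_X(E))=1$. Both are valid; your flatness computation is a nice sanity check and your Stein argument is sound (the totally-ramified case forces $E$ to move in a pencil, contradicting its rigidity), but the step you flag as the main obstacle is exactly the one the paper's restriction-sequence trick renders unnecessary.
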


\begin{proof} Consider the short exact sequence on $X$, 
$$
0\rightarrow {\mathcal O}_X\rightarrow {\mathcal O}_X(E)
\rightarrow i_*{\mathcal O}_E(E)\rightarrow 0
.$$  
Since $X$ is Calabi-Yau, we have by adjunction that $\omega_E\cong\omega_X(E)|_E\cong \mathcal O_E(E)$, where $\omega_X,\omega_E$ are the canonical bundles of $X$ and $E$, respectively.  We use the long exact sequence in cohomology to calculate cohomology 
of ${\mathcal O}_X(E)$.
Since $\omega_E$ has no global sections, we see that $h^0(X,\mathcal O_X(E))=1$.  Since $h^1(X,\mathcal O_X)=h^1(E,\omega_E)=0$, we also see that $h^1(X,\mathcal O_X(E))=0$.  Furthermore, we have $h^2(X,\mathcal O_X)=0$, $h^2(E,\omega_E)=h^0(E,\mathcal O_E)=1$ by Serre duality, and $h^3(X,\mathcal O(E))=h^0(X,\mathcal O(-E))=0$ by Serre duality and the fact that $-E$ is not effective.  Finally $h^3(X,\mathcal O_X)=1$ 
implies  $h^2(X,\mathcal O(E))=0$.

Now consider the short exact sequence, 
$$0\rightarrow \mathcal O_X(E)\rightarrow \mathcal O_X(2E)\rightarrow 
i_* \mathcal O_E\rightarrow 0,$$
where we have used $\mathcal O_E\cong \omega_E^2\cong  \mathcal O_E(2E)$.  From 
$h^1(\mathcal O_X(E))=0$
we conclude that  $h^0(X,\mathcal O_X(2E))=2$ and  $h^1(X,\mathcal O_X(2E))=0$. 

To see that the linear system $|2E|$ is base-point free, note that its only base-points must be along $E$ itself.  However 
a section of $\mathcal O_X(2E)$ that maps nontrivially to $H^0(i_* \mathcal O_E)$
does not vanish on $E$.
Thus we get a morphism $\pi:X\rightarrow \mathbb P^1$ with $E$ a double fibre. 
By Generic Smoothnes (Theorem III.10.7 in \cite{Har}) the generic fibre of $\pi$ is smooth.  So let $D\in |2E|$ be such a generic fibre.  Then by adjunction $\omega_D\cong\mathcal O_D(2E)\cong \mathcal O_D$ since $D$ and $2E$ are distinct fibers.  From the exact sequence, \[0\rightarrow \mathcal O_X\rightarrow \mathcal O_X(2E)\rightarrow i_*\mathcal O_D(2E)\rightarrow 0,\] and the fact that $\omega_D\cong \mathcal O_D(2E)$ we see that $h^1(D,\omega_D)=0$ since by above $h^1(X,\mathcal O_X(2E))=0=h^2(X,\mathcal O_X)$.  Thus $D$ is a K3 surface.
\end{proof}

\section{Appendix 3: Linear algebra lemma}\label{linalg}
In this section we establish some linear algebra results which are used in
Section \ref{secwp}. We will state the results for arbitrary $n\geq 1$ although only the $n=2$
case will be used in this paper.

\begin{proposition}\label{tn}
Let $M$ be a $2n\times 2n$ matrix with complex coefficients.
Let $T$ be a $2n\times n$ complex matrix of rank $n$ such that 
$$
T^*MT=0.
$$
Then for any $2n\times n$ complex matrix $S$ we have 
$$
\det (S^*MT)\det(T^*MS) = (-1)^n\det (M) \det(T\vert S)^2
$$
where $T\vert S$ is a $2n\times 2n$ matrix made from $T$ and $S$.
\end{proposition}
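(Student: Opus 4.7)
\begin{proof*}[Proof proposal]
The plan is to form the $2n\times 2n$ block matrix $(T\vert S)$ and compute $\det\bigl((T\vert S)^*M(T\vert S)\bigr)$ in two different ways. One way will use multiplicativity of the determinant and yield $\det(T\vert S)^2\det(M)$; the other will exploit the hypothesis $T^*MT=0$ to reduce the problem to a block computation that produces $\det(S^*MT)\det(T^*MS)$ up to a sign.

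First I would write out
$$(T\vert S)^*M(T\vert S)=\begin{pmatrix}T^*MT & T^*MS\\ S^*MT & S^*MS\end{pmatrix}=\begin{pmatrix}0 & T^*MS\\ S^*MT & S^*MS\end{pmatrix},$$
using the hypothesis to kill the upper-left block. Since $(T\vert S)$ is $2n\times 2n$, multiplicativity gives
$$\det\bigl((T\vert S)^*M(T\vert S)\bigr)=\det(T\vert S)^2\det(M).$$

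Next I would evaluate the determinant of the anti-block-triangular matrix directly. Applying the $n$ disjoint row transpositions $(i,n+i)$ for $i=1,\ldots,n$ turns the matrix into the block upper-triangular form
$$\begin{pmatrix}S^*MT & S^*MS\\ 0 & T^*MS\end{pmatrix},$$
whose determinant is $\det(S^*MT)\det(T^*MS)$. The row operation contributes a sign of $(-1)^n$, so
$$\det\begin{pmatrix}0 & T^*MS\\ S^*MT & S^*MS\end{pmatrix}=(-1)^n\det(S^*MT)\det(T^*MS).$$
Equating the two expressions for $\det\bigl((T\vert S)^*M(T\vert S)\bigr)$ yields the claimed identity.

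There is really no obstacle beyond getting the sign right; the key step is recognizing that the hypothesis $T^*MT=0$ is tailor-made to force the block matrix into anti-triangular form, at which point the determinant essentially computes itself. The rank-$n$ assumption on $T$ plays no role in the identity as stated (both sides are polynomial in the entries of $T$), but guarantees in applications that $\det(T\vert S)$ is not identically zero as a function of $S$, so that the identity gives genuine information about the ratio computed in Section~\ref{secwp}.
\end{proof*}
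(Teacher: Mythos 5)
Your proof is correct and is essentially the paper's argument in a slightly more invariant packaging: the paper normalizes $T$ to $\left(\begin{smallmatrix}{\rm Id}_n\\ 0\end{smallmatrix}\right)$ by a congruence $(M,T,S)\mapsto (U^*MU,U^{-1}T,U^{-1}S)$ and then reads off the same block anti-triangular determinant, which is exactly what you get by taking $U=(T\vert S)$ and computing $\det\bigl((T\vert S)^*M(T\vert S)\bigr)$ two ways. Both hinge on the identity $\det\left(\begin{smallmatrix}0&B\\ C&D\end{smallmatrix}\right)=(-1)^n\det(B)\det(C)$, and your sign bookkeeping is right.
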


\begin{proof}
The transformation $(M,T,S)\mapsto
(U^*MU,U^{-1}T,U^{-1}S)$ 
for an invertible $2n\times 2n$ matrix $U$  does not alter the two sides of the claim.
Thus we may assume that 
$$
T=\left(
\begin{array}{c}
{\rm Id}_{n}\\
0
\end{array}
\right).
$$
Then $T^*MT=0$ means that 
$$
M=\left(
\begin{array}{cc}
0&M_1\\
M_2&M_3
\end{array}
\right)
$$
for some $n\times n$ matrices $M_i$. If 
$$
S=\left(
\begin{array}{c}
S_1\\
S_2
\end{array}
\right),
$$
then 
$$
S^*MT=S_2M_2,~T^*MS=M_1S_2,~\det (T\vert S)=\det S_2
$$
and the claim follows easily.
\end{proof}

We will also need a particular case of this statement.
\begin{proposition}\label{skew}
Let $M$ and $T$ be as above. Assume in addition that $M^*=-M$. 
Then 
$$
\det(S^*MT)=(-1)^n\det(T\vert S) {\rm Pf}(M).
$$
\end{proposition}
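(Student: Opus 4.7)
The plan is to adapt the change-of-variables trick from Proposition \ref{tn}. Both sides of the claim are invariant under the transformation $(M,T,S)\mapsto(U^*MU,\,U^{-1}T,\,U^{-1}S)$ for any invertible $2n\times 2n$ matrix $U$: the left-hand side is obviously unchanged, and on the right-hand side the factor $\det(T|S)$ picks up $\det(U)^{-1}$ while the identity ${\rm Pf}(U^*MU)=\det(U)\,{\rm Pf}(M)$ for skew-symmetric $M$ supplies a compensating $\det(U)$. Skew-symmetry of $M$ is preserved. This lets me assume $T=\left(\begin{matrix}I_n\\0\end{matrix}\right)$, which together with $T^*MT=0$ and $M^*=-M$ forces
$$
M=\left(\begin{matrix}0 & M_1\\ -M_1^* & M_3\end{matrix}\right)
$$
with $M_3$ skew-symmetric of size $n$.

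Writing $S=\left(\begin{matrix}S_1\\S_2\end{matrix}\right)$, a direct computation gives $S^*MT=-S_2^*M_1^*$ and $\det(T|S)=\det(S_2)$, so that $\det(S^*MT)=(-1)^n\det(S_2)\det(M_1)$. The desired identity therefore reduces to the single claim ${\rm Pf}(M)=\det(M_1)$ for $M$ of the block form above.

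For this key identity, swapping the two block rows of $M$ produces an upper block-triangular matrix with diagonal blocks $-M_1^*$ and $M_1$, so $\det(M)=\det(M_1)^2$ independently of $M_3$. Hence ${\rm Pf}(M)=\pm\det(M_1)$, and since ${\rm Pf}$ depends polynomially on the entries of $M$ while its square is constant in $M_3$, the sign is independent of $M_3$. Applying ${\rm Pf}(A^*MA)=\det(A)\,{\rm Pf}(M)$ with $A={\rm diag}(I_n,M_1^{-1})$ on the dense open set where $M_1$ is invertible then reduces the sign computation to the single normalization ${\rm Pf}\!\left(\begin{matrix}0 & I_n\\-I_n & 0\end{matrix}\right)=1$ fixed by the sign convention implicit in \eqref{formal}. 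The main obstacle is the careful sign bookkeeping in this last step; once it is handled, the pieces combine to give $\det(S^*MT)=(-1)^n\det(T|S)\,{\rm Pf}(M)$.
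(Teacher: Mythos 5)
Your argument is the same one the paper intends --- its proof of this proposition is literally the single sentence ``The proof is analogous'' (to Proposition \ref{tn}) --- and the substantive steps are all correct: the invariance of both sides under $(M,T,S)\mapsto(U^*MU,U^{-1}T,U^{-1}S)$ (using ${\rm Pf}(U^*MU)=\det(U)\,{\rm Pf}(M)$), the reduction to $T=\left(\begin{smallmatrix}I_n\\0\end{smallmatrix}\right)$, the computation $\det(S^*MT)=(-1)^n\det(S_2)\det(M_1)$ together with $\det(T\vert S)=\det(S_2)$, and the observation that ${\rm Pf}(M)=\varepsilon\det(M_1)$ with a sign $\varepsilon$ independent of $M_3$.

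The one piece of content this proposition carries beyond Proposition \ref{tn} is precisely that sign (the identity up to sign already follows from Proposition \ref{tn}, since for skew $M$ one has $\det(T^*MS)=(-1)^n\det(S^*MT)$ and $\det(M)={\rm Pf}(M)^2$), and that is the step you do not actually close. The normalization ${\rm Pf}\left(\begin{smallmatrix}0&I_n\\-I_n&0\end{smallmatrix}\right)=1$ is not ``implicit in \eqref{formal}'': the construction there is unchanged under ${\rm Pf}\mapsto-{\rm Pf}$ combined with $y\mapsto-y$, so it cannot fix a convention; and it is not the standard normalization either. With the usual convention (${\rm Pf}$ of a direct sum of $n$ blocks $\left(\begin{smallmatrix}0&1\\-1&0\end{smallmatrix}\right)$ equals $1$) one gets ${\rm Pf}\left(\begin{smallmatrix}0&I_n\\-I_n&0\end{smallmatrix}\right)=(-1)^{n(n-1)/2}$, hence $\varepsilon=(-1)^{n(n-1)/2}$ and $\det(S^*MT)=(-1)^{n(n+1)/2}\det(T\vert S)\,{\rm Pf}(M)$, which agrees with the stated $(-1)^n$ only for $n\equiv 0,1 \pmod 4$. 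A concrete check at $n=2$ with $M_1=I_2$, $M_3=0$, $S=\left(\begin{smallmatrix}0\\I_2\end{smallmatrix}\right)$ gives $\det(S^*MT)=1$ while $\det(T\vert S)=1$ and ${\rm Pf}(M)=-1$ in the standard convention, so the two sides differ by a sign in the only case the paper uses. You therefore need to either compute $\varepsilon$ honestly against an explicitly chosen definition of ${\rm Pf}$ (e.g.\ by evaluating at $M_1=I_n$, $M_3=0$) or state which normalization is in force; as written, the sign --- which is the whole point of the statement --- is left undetermined. The saving grace, for you and for the paper, is that the application is insensitive to this: a global sign flip of ${\rm Pf}$ in \eqref{ciwp} is absorbed by $y\mapsto-y$.
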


\begin{proof}
The proof is analogous.
\end{proof}

\begin{proposition}\label{kernel}
If $T^*MT=0$, ${\rm rk}M=2n-1$ and $\det (S^*MT)=0$, then 
the right kernel of $M$ lies in  the span of columns of $T$.
\end{proposition}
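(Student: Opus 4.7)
The plan is to read the hypothesis $\det(S^*MT)=0$ as a polynomial identity in the entries of $S$, which is the interpretation forced by the way the proposition is invoked: in the discussion after Theorem \ref{wpmodel}, $M=\sum_i u_i(A_i+q_1 B_i)$ is fixed and $S$ ranges freely, and the factor $\hat y+\hat z q_1+\mathrm{Pf}(\sum u_i B_i)q_1^2$ vanishes because $q_1$ is a root of $y+zq+\mathrm{Pf}(\sum u_i B_i)q^2=0$, so the identity $\det(S^*MT)=(\hat y+\hat z q+\mathrm{Pf}(\sum u_iB_i)q^2)\det(T\vert S)$ gives $\det(S^*MT)=0$ identically in $S$.

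First I would show that under this interpretation the hypothesis is equivalent to $\operatorname{rk}(MT)<n$. Indeed, $S^*MT=S^*\cdot(MT)$ is a product of an $n\times 2n$ matrix with a $2n\times n$ matrix, so by Cauchy--Binet (or directly, by choosing $S$ to consist of $n$ standard basis columns selecting a fixed $n\times n$ submatrix of $MT$) the determinant $\det(S^*MT)$ is identically zero in $S$ if and only if every $n\times n$ minor of $MT$ vanishes, i.e.\ $\operatorname{rk}(MT)<n$.

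Next I would translate this into a kernel statement. Let $V_1=\mathrm{col}(T)\subset\mathbb{C}^{2n}$; since $T$ has rank $n$, $\dim V_1=n$, and $MT$ is the matrix of the linear map $M\vert_{V_1}\colon V_1\to\mathbb{C}^{2n}$. Hence
\[
\operatorname{rk}(MT)=\dim M(V_1)=n-\dim(V_1\cap\ker M),
\]
so $\operatorname{rk}(MT)<n$ is equivalent to $V_1\cap\ker M\neq 0$.

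Finally I would use the rank hypothesis $\operatorname{rk}M=2n-1$, which forces $\dim\ker M=1$. A one-dimensional subspace having nontrivial intersection with $V_1$ must already lie inside $V_1$, so $\ker M\subset V_1=\mathrm{col}(T)$, proving the claim. There is no real obstacle to this argument; the only delicate point is fixing the correct reading of ``$\det(S^*MT)=0$,'' and I note that the hypothesis $T^*MT=0$ is carried over from the surrounding setup but is not actually used in this particular implication.
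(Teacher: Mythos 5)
Your proof is correct, and your reading of the hypothesis $\det(S^*MT)=0$ --- as an identity in $S$, equivalently as vanishing for some $S$ with $\det(T\vert S)\neq 0$ --- is the one the application in Section \ref{secwp} requires. Your route is genuinely different from the paper's. The paper reuses the normalization of Proposition \ref{tn}: it transforms $T$ into $\left(\begin{smallmatrix}{\rm Id}_n\\ 0\end{smallmatrix}\right)$, writes $M=\left(\begin{smallmatrix}0&M_1\\ M_2&M_3\end{smallmatrix}\right)$ using $T^*MT=0$, and asserts that $\rk M=2n-1$ forces $M_1$ to be nondegenerate, so that any kernel vector has vanishing last $n$ components. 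You instead argue invariantly: Cauchy--Binet turns the hypothesis into $\rk(MT)<n$, the identity $\rk(MT)=n-\dim(\ker M\cap\mathrm{col}(T))$ gives a nonzero kernel vector inside $\mathrm{col}(T)$, and $\dim\ker M=1$ upgrades this to containment. Your version buys two things. First, it makes explicit that $T^*MT=0$ is not needed, which is a correct observation (in the paper it only serves to produce the zero block). Second, it is actually more watertight: the paper's one-line justification is not literally valid, since $\rk M=2n-1$ alone does not force $M_1$ to be invertible (for $n=1$ take $M={\rm diag}(0,1)$ and $T=(1,0)^*$, where $M_1=0$ yet the conclusion still holds); what the hypothesis really yields in the block picture is $\det M_2=0$, hence a kernel vector of the form $(x,0)^*$, and one must then invoke one-dimensionality of the kernel --- which is precisely the dimension count you supply.
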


\begin{proof} We again transform $T$ into a standard form. We observe that
since $\rk M=2n-1$, the matrix $M_1$ is nondegenerate, so the last $n$ components
of the right kernel of $M$ are zero.
\end{proof}.


\begin{thebibliography}{999999}

\bibitem[B1]{Bat94} Batyrev, V., "Dual Polyhedra and Mirror Symmetry for Calabi-Yau Hypersurfaces in Toric Varieties", \textit{J. Alg. Geom.} \textbf{3} (1994), 493-535.

\bibitem[B2]{Bat99} Batyrev, V., "Birational Calabi-Yau n-folds have equal Betti numbers", \textit{New trends in algebraic geometry} (Warwick, 1996), 1-11, London Math. Soc. Lecture Note Ser. 264, Cambridge Univ. Press, Cambridge, 1999.

\bibitem[BK]{BK} Batyrev, V, Kreuzer, M., "Constructing new Calabi-Yau 3-folds and their mirrors via conifold transitions", 
 Adv. Theor. Math. Phys. 14 (2010), no. 3, 879-898.

\bibitem[BP]{BP} Barth, W., Peters, C., "Automorphisms of Enriques Surfaces", \textit{Inv. Math.} \textbf{73} (1983), 383-411.

\bibitem[C1]{CP} Cossec, F., "On the Picard Group of Enriques surfaces", \textit{Math. Ann.} \textbf{272} (1985), 369-384.

\bibitem[C2]{CR} Cossec, F., "Reye Congruences", \textit{Trans. Amer. Math. Soc.} \textbf{280} (1983), 737-751.

\bibitem[CD1]{CD} Cossec, F., Dolgachev, I., \textit{Enriques Surfaces I}. Progress in Mathematics, \textbf{76}, Birkh\"{a}user 1989.

\bibitem[CD2]{CD2} Cossec, F., Dolgachev, I., \textit{Enriques Surfaces II}, in preparation.


\bibitem[DH]{DH} Diaz, S., Harbater, D., "Strong Bertini theorems", \textit{Trans. Amer. Math. Soc.} \textbf{324} (1991), no. 1, 73-86.

\bibitem[DM]{DM} Dolgachev, I., Markushevich, D., "Lagrangian Tens of Planes, Enriques Surfaces, and Holomorphic Symplectic Fourfolds", manuscript in preparation.

\bibitem[DR]{DR} Dolgachev, I., Reider, I., "On rank 2 vector bundles with $c_1^2=10$ and $c_2=3$ on Enriques surfaces", \textit{Proc. USA-USSR Conf. on Algebraic Geometry, Chicago, 1989}, (1991), 39-49.

\bibitem[GLM]{GLM} Giraldo, L., Lopez, A. F., Mu\~{n}oz, R., "On the projective normality of Enriques surfaces. With an appendix by Lopez and Allessandro Verra", \textit{Math. Ann.} \textbf{324} (2002), 135-158.

\bibitem[GP]{GP} Gross, M., Popescu, S., "Calabi-Yau Threefolds and Moduli of Abelian Surfaces I", 
\textit{Compositio Math.} \textbf{127} (2001), no. 2, 169-228.

\bibitem[GS]{GS} Grayson, D., Stillman, M., "Macaulay 2: A computer program designed to support computations in algebraic geometry and computer algebra." Source and object code available from http://www.math.uiuc.edu/Macaulay2/.

\bibitem[H]{Har} Hartshorne, R.,  \textit{Algebraic Geometry}, Springer 1977.


\bibitem[HT]{HT} Hosono, S., Takagi, H., "Determinantal Quintics and Mirror Symmetry of Reye Congruences", arXiv:1208.1813.

\bibitem[Kap]{Kap} Kapustka, G., "Projections of del Pezzo surfaces and Calabi-Yau threefolds", arXiv:1010.3895

\bibitem[Kaw1]{Kaw88} Kawamata, Y., "Crepant blowing-up of 3-dimensional canonical singularities and its application to degenerations of surfaces", \textit{Ann. of Math.} \textbf{127} (1988), 93-163.

\bibitem[Kaw2]{Kaw97} Kawamata, Y., "On the cone of divisors of Calabi-Yau fiber spaces", \textit{Internat. J. Math.} \textbf{8} (1997), 665-687.

\bibitem[Kaw3]{Kaw08} Kawamata, Y., "Flops connect minimal models", \textit{Publ. Res. Inst. Math. Sci.} \textbf{44} (2008), 419-423.

\bibitem[KS1]{KS97} Kreuzer, M., Skarke, H., "On the classification of reflexive polyhedra." \textit{Comm. Math. Phys.} \textbf{185} (1997), 2, 495-508.

\bibitem[KS2]{KS00} Kreuzer, M., Skarke, H., "Complete classification of reflexive polyhedra in four dimensions", \textit{Adv. Theor. Math. Phys.} \textbf{4} (2000), 6, 1209-1230.


\bibitem[Mor]{Mor} Morrison, D., "Compactifications of moduli spaces inspired by mirror symmetry", \textit{Jour. de g\'{e}om\'{e}trie alg\'{e}brique d'Orsay (Orsay, 1992)}. Ast\'{e}risque \textbf{218} (1993) 243-271.

\bibitem[MP]{MP} Maulik, D., Pandharipande, R., "New calculations in Gromov-Witten theory", \textit{Pure Appl. MAth. Q.} \textbf{4} (2008), 469-500.


\bibitem[R]{Pagoda} Reid, M., "Minimal models of canonical threefolds", \textit{Adv. Stud. Pure Math.} \textbf{1} (1983), 131-180.



\bibitem[S]{Ser} Sernesi, E., \textit{Deformations of Algebraic Schemes} Springer, 2006.


\bibitem[T]{Ton} Tonoli, F., "Construction of Calabi-Yau 3-folds in $\mathbb P^6$", \textit{J. Algebraic Geometry}, \textbf{13} (2004), 209-232.

\bibitem[W]{W} Wilson, P.M.H., "The K\"{a}hler cone on Calabi-Yau threefolds", \textit{Inv. Math.}, \textbf{107} (1992), 561-583.

\end{thebibliography}
\end{document}